\newcommand{\AAA}{\mathbf A}
\newcommand{\nn}{\mathbf n}
\newcommand{\rr}{\mathbf r}
\newcommand{\dd}{\mathbf d}
\def\cC{{\mathcal C}}
\def\cE{{\mathcal E}}
\def\cI{{\mathcal I}}
\def\PC{\operatorname{PC}}
\def\rank{\operatorname{rank}}
\newcommand{\IP}{\operatorname{IP}} 
\newcommand{\der}{\operatorname{der}}
\newcommand{\VO}{\operatorname{VO}}
\newtheorem{theorem}{Theorem}
\newtheorem{lemma}[theorem]{Lemma} 
\newtheorem{corollary}[theorem]{Corollary}
\theoremstyle{definition}
\newtheorem{example}[theorem]{Example}
\theoremstyle{remark}
\numberwithin{equation}{section}
\newcommand{\SG}{\operatorname{SG}}
\newcommand{\LG}{\operatorname{LG}}
\begin{document}

\title{The small growth invariants \\ of Goursat distributions}

\author[S.\ J.\ Colley]{Susan Jane Colley}
\address{Department of Mathematics,
Oberlin College, Oberlin, Ohio 44074, USA}
\email{scolley@oberlin.edu}

\author[G.\ Kennedy]{Gary Kennedy}
\address{Ohio State University at Mansfield, 1760 University Drive,
Mansfield, Ohio 44906, USA}
\email{kennedy@math.ohio-state.edu}

\author[C.\ Shanbrom]{Corey Shanbrom}
\address{California State University, Sacramento, 6000 J St., Sacramento, CA 95819, USA}
\email{corey.shanbrom@csus.edu}


\begin{abstract}
This is the second of a pair of papers devoted to the local invariants of Goursat distributions.
The study of these distributions naturally
leads to a tower of spaces over an arbitrary surface,
called the monster tower,
and thence to connections with the topic
of singularities of curves on surfaces.
In the prior paper we studied
 those invariants of Goursat distributions
akin to those of curves on surfaces,
which we call structural invariants.
In this paper we study invariants arising from
the small growth sequence of a Goursat distribution,
and relate them to the the structural invariants.
\end{abstract}

\subjclass{58A30, 14H20, 53A55, 58A15}


\date{\today}
\maketitle


\section{Introduction}

A \emph{distribution} $D$ on a manifold $M$ is a subbundle of the tangent bundle $TM$.
It is called \emph{Goursat}
if the \emph{Lie square sequence}
\[ 
D=D_{1} \subseteq D_{2} \subseteq D_{3} \subseteq \cdots 
\]
(as defined in Section~\ref{smallgrowthsection})
is a sequence of vector bundles for which
$$
\rank D_{i+1} =1+\rank D_i
$$
until one reaches the full tangent bundle.
This is the second of a pair of papers devoted to the local invariants of Goursat distributions.
In the first paper \cite{MR4887124} we gave an account of those invariants that are akin to well-known
invariants in the theory of curves on surfaces.
Here we turn our attention to invariants stemming more directly from the definition of
Goursat distributions, which have no existing counterpart in the theory of curves on surfaces;
we will call them \emph{small growth invariants}.
We continue, however, to use the connection with that other theory
in order to analyze them and work out recursive schemes for computing them.
The invariants of the earlier paper, when applied in the context of curves on surfaces,
are all what singularity theorists call \emph{complete topological invariants}.
The small growth invariants are somewhat coarser:
this is because curves that are not of the same topological type may define the
same Goursat distribution.
Nevertheless, they seem to be more challenging to calculate.

Figure~\ref{invdiagram}
shows most of the invariants that are analyzed in the two papers.
(In the body of this paper we will introduce several other invariants,
including the table of values for the quantities $e_{hi}$ of Section~\ref{secondstructure},
its associated \emph{$b$ vector}, and the \emph{proximity diagram}.)
Figure~\ref{exdiagram} presents an example, using the
same layout.

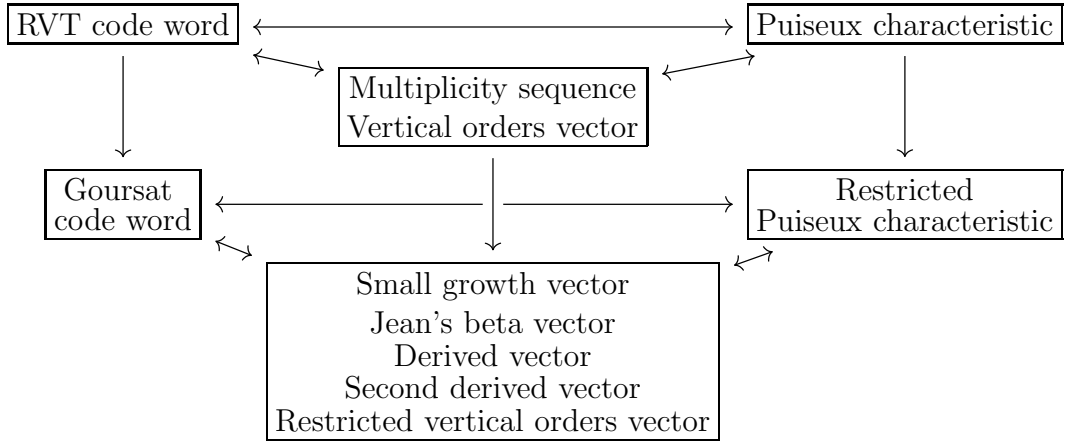
\begin{figure} 
\[   \begin{tikzcd}[sep=0pt]
\fbox{RVT code word} \arrow[rr, leftrightarrow]
\arrow[dd] \arrow[dr, leftrightarrow] &   &
\fbox{Puiseux characteristic} 
\arrow[dl, leftrightarrow] \arrow[dd] \\  & 
\fbox{$
\substack{\text{\normalsize{Multiplicity sequence}} \\[3pt] \text{\normalsize{Vertical orders vector}}}
$}
  &    \\
\fbox{$
\substack{\text{\normalsize{Goursat} }\\[2pt] \text{\normalsize{code word}}} 
$}
\arrow[rr, leftrightarrow]  \arrow[dr, leftrightarrow] &   & 
\fbox{$
\substack{\text{\normalsize{Restricted}} \\[2pt] \text{\normalsize{Puiseux characteristic}}} 
$}
\arrow[dl, leftrightarrow] \\   &   
\fbox{$
\substack{\text{\normalsize{Small growth vector}} \\[3pt] \text{\normalsize{Jean's beta vector}} \\[3pt] \text{\normalsize{Derived vector}} \\[3pt] \text{\normalsize{Second derived vector}} \\[3pt] \text{\normalsize{Restricted vertical orders vector}}} 
$}
\arrow[uu, leftarrow, crossing over]  &   
\end{tikzcd} \]
\caption{The invariants listed in the top five boxes
were studied in our prior paper \cite{MR4887124};
those listed in the bottom box are invariants of the small growth sequence
of a Goursat germ. See Figure~\ref{exdiagram} for an example.}
\label{invdiagram}
\end{figure}

\begin{figure}
\begin{center}
\includegraphics[scale=0.85]{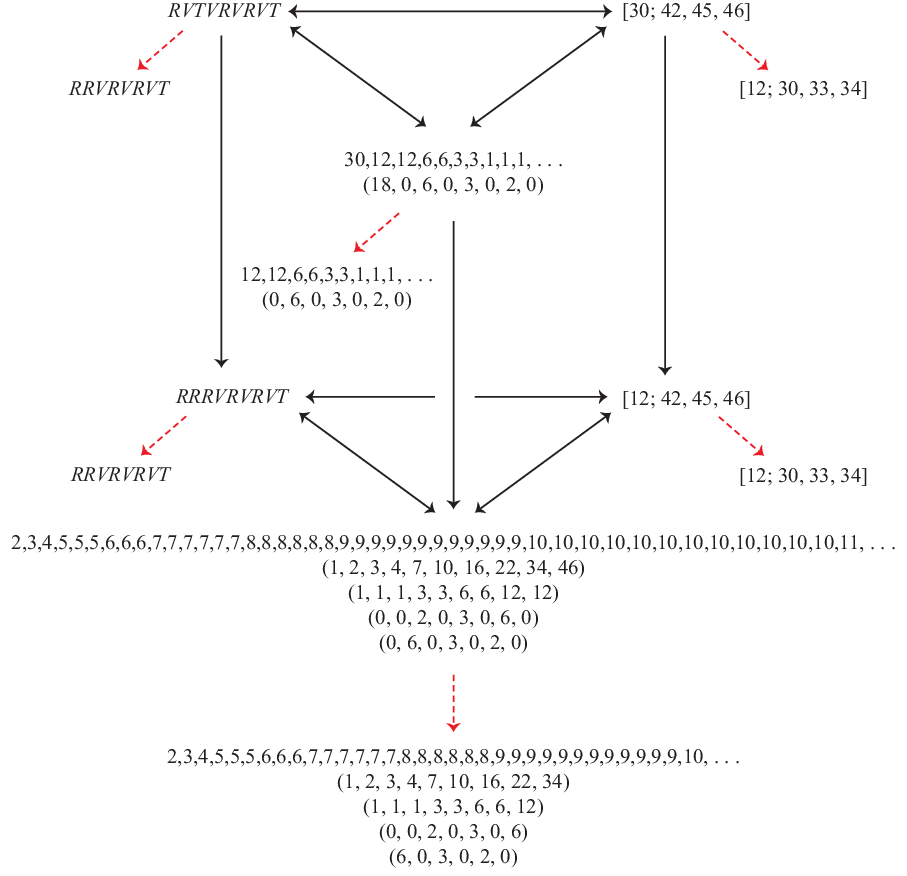}
\end{center}
\caption{An example of the invariants of Figure~\ref{invdiagram}.
The dashed arrows indicate compatible front-end recursions.}
\label{exdiagram}
\end{figure}

As explained in Section~\ref{3settings},
we give our explanations and arguments simultaneously in three settings:
smooth manifolds, complex manifolds, and nonsingular algebraic varieties.
We begin in Section~\ref{smallgrowthsection} by reviewing the definitions
of the Lie square sequence, Goursat distributions, the small growth sequence
and small growth vector, Jean's beta vector and its derived vectors,
and the degree of nonholonomy.
In Section~\ref{recollect} we briefly recall the content of our earlier paper 
\cite{MR4887124}. We merely name the most essential vocabulary from that paper;
to fully understand what we are doing here, one will need to read many of its details.
Since we make extensive use of the coordinate systems on the standard
charts of the monster spaces,
in Section~\ref{ccvf} we recall their construction; we also introduce two 
sequences of standard vector fields.
In the rather technical Section~\ref{ldlb}, we work out formulas for the Lie
brackets of these vector fields,
and find a particular basis in which these formulas look relatively simple.
Section~\ref{fovo} explicates the notions of focal order and vertical order
for functions on the monster spaces.
Section~\ref{secondstructure} presents Theorem~\ref{structuretheorem2},
our main technical result; it provides detailed information about
the sheaves in the small growth sequence, leading to inequalities
(in Corollary~\ref{comparison})
that compare structural invariants and small growth invariants.
We want to show that in fact we have equalities,
and to do so we need to exhibit specific sections of the sheaves.
We do exactly that in Section~\ref{pathway};
the basic idea is that we want to avoid certain cancellations of leading terms.
The remaining sections are the payoff.
Section~\ref{tsgi} briefly explains the simple relationships
between three structural invariants and their three small growth counterparts.
Section~\ref{recursions} discusses how one can calculate these six invariants
as functions of the RVT or Goursat code words
via recursion.
Our line of argument naturally leads to front-end recursions,
but we also remark on back-end recursions, most notably that of Jean
for his beta vector \cite{MR1411581}.
We conclude in Section~\ref{NHPC} by showing that the degree of nonholonomy for
a Goursat distribution coincides with the last entry in its associated Puiseux characteristic;
it seems that this was never noticed previously.

We thank Richard Montgomery, Piotr Mormul, Lee McEwan, and Justin Lake
for valued feedback and enormous patience.


\newpage
\section{The three settings}
\label{3settings}

We begin with a space $M$ of dimension $m \ge 2$, by which we mean either
\begin{enumerate}
\item \label{settingrm}
a smooth manifold, or
\item \label{settingcm}
a complex manifold, or
\item \label{settingav}
a nonsingular algebraic variety
over an algebraically closed field
of characteristic 0.
\end{enumerate}
All the constructions of this paper are understood
to be in the chosen setting, e.g.,
in setting~\ref{settingcm}
we work with holomorphic sections of holomorphic bundles,
with $m$ denoting the complex dimension of $M$,
whereas in setting~\ref{settingav}
we work with algebraic bundles and sections.
In a similar way, we use the word ``surface'' to mean
either a smooth manifold of dimension 2, a complex
manifold of dimension 2, or a nonsingular algebraic
surface over an algebraically closed field
of characteristic 0,
with a similar convention
for the word ``curve.''
We remark that although in our prior paper \cite{MR4887124}
we assumed the first setting,
all of the results obtained there are equally valid in all three settings.


\section{Small growth} \label{smallgrowthsection}

We repeat here selected definitions from \cite{MR4887124}.
Let $D$ be a distribution on $M$, i.e., 
a subbundle of its tangent bundle $TM$.
Let $\cE$ be its sheaf of sections,
which is a subsheaf of the sheaf $\Theta_M$
of sections of $TM$.
In other words, let $\Theta_M$
be the sheaf of vector fields on $M$,
and let $\cE$ be the subsheaf of
vector fields
tangent to $D$.

The \emph{Lie square} of $\cE$ is
\[
\cE_{2}=[\cE,\cE],
\]
meaning the subsheaf of $\Theta_M$ whose sections are generated by
Lie brackets of sections of $\cE$ and the sections of $\cE$ itself.
Note that in general the rank of $\cE_{2}$
may vary from point to point.
If, however, $\cE_{2}$
is the sheaf of sections of a distribution $D_{2}$,
then this rank is constant.
Beginning with $\cE_{1}=\cE$,
recursively we define $\cE_{i+1}$ to be the Lie square of $\cE_{i}$,
and we call 
\begin{equation} \label{Liesquaresheaves}
\cE_{1} \subseteq \cE_{2} \subseteq \cE_{3} \subseteq \cdots 
\end{equation}
the \emph{Lie square sequence}.

Let $d$ be the rank of $D$.
We say that $D$ is a \emph{Goursat distribution}
if each sheaf $\cE_{i}$ is the sheaf of sections
of a distribution $D_{i}$
and if
$$
\rank D_{i+1} = 1 + \rank D_i
$$
for $i=1,\dots,m-d$.
In particular $D_{m-d+1}$ is the tangent bundle $TM$.
For a Goursat distribution, the sequence
\[ 
D=D_{1} \subset D_{2} \subset D_{3} \subset \cdots \subset D_{m-d} \subset D_{m-d+1} = TM
\]
is also called the Lie square sequence.
Since a distribution of rank 1 is always integrable,
a Goursat distribution necessarily has rank at least 2.
As explained at the end of Section~6 of \cite{MR4887124}, the essential case for understanding all Goursat distributions is the case of rank 2.
For the remainder of the paper we assume that $D$ is a Goursat distribution with $d=2$.

Again denoting the sheaf of sections of $D$ by $\cE$, we consider the
\emph{small growth sequence}:
$$
\cE = \cE^1 \subseteq \cE^2 \subseteq \cE^3 \subseteq \cdots
$$
defined by
$$
\cE^j = [\cE, \cE^{j-1}],
$$
meaning the subsheaf of $\Theta_M$ whose sections are generated by
Lie brackets of sections of $\cE$ with sections of $\cE^{j-1}$
and by the sections of $\cE^{j-1}$.
Note how this differs from the Lie square sequence in (\ref{Liesquaresheaves}): at each step we form
Lie brackets with vector fields from the beginning distribution.
For each point $p \in M$ we let
$\SG_i$ denote the rank of $\cE^i$ at $p$;
we call
\[
\SG(D,p) = \SG_1,\SG_2,\ldots
\]
the \emph{small growth vector}.
At each step the rank
grows at most by one, and eventually the entries of $\SG$
stabilize at the value $m$.
Simple examples show that this vector may differ from point to point of $M$.  

Jean's \emph{beta  vector} \cite{MR1411581}
\[
\beta(D,p) = (\beta_2,\beta_3, \dots,\beta_m)
\]
records the positions
in the small growth vector where the rank increases:
\[
\beta_i = \min\{j: \SG_j=i\}.
\]
The last entry $\beta_m$, which records how many steps
it takes to reach the full tangent bundle, is called
the \emph{degree of nonholonomy}.

From the definitions we immediately see that $\cE^1 = \cE_1$, $\cE^2 = \cE_2$, and 
that $\cE^3$ is a subsheaf of $\cE_3$. In fact $\cE^3 = \cE_3$. To establish the opposite containment, we consider a generator of $\cE_3$ and write it, using the Jacobi identity, as a sum of two sections of $\cE^3$:
\[
[[a, b],[c, d]] = -[c,[d,[a,b]]] + [d,[c,[a,b]]].
\]
Thus the small growth vector of a rank 2 Goursat distribution always begins with $2, 3, 4$ and  the initial components of the beta vector are $1, 2, 3$.

The differences of successive terms
\[
\der_i = \beta_i - \beta_{i-1}
\]
form the \emph{derived vector}
\[\der(D,p)=(\der_3,\der_4,\ldots,\der_m),\]
and the second differences
\[
{\der^2}_i = \der_i - \der_{i-1}
\] form the \emph{second derived vector}
\[\der^2(D,p)=({\der^2}_4,{\der^2}_5,\ldots,{\der^2}_m).\]
We note that the  derived vector must begin with a pair of $1$'s and the second derived vector begins with $0$.
\begin{example}
If the small growth vector of $D$ at $p$ is
\[
\SG(D,p) = 2,3,4,5,6,6,6,7,7,7,8,8,8,8,8,8,9,9,9,9,9,9,9,9,9,
10,10,\ldots 
\]
then
\begin{align*}
\beta(D,p) &= (1,2,3,4,5,8,11,17,26) \\
\der(D,p) &= (1,1,1,1,3,3,6,9) \\
\der^2(D,p) &= (0,0,0,2,0,3,3). 
\end{align*}
\end{example}


\section{Recollection of the first paper} \label{recollect}

This paper relies heavily on our previous paper \cite{MR4887124}.
In the following breezy recollection, we italicize vocabulary whose
definitions can be found there.
In that paper we recalled, following \cite{MR1841129} and \cite{MR2172057}, how one
assigns a \emph{Goursat code word} to the germ of a Goursat distribution
at a point. Following \cite{MR1841129},
we explained the notion of \emph{prolongation} of Goursat distributions,
which leads to a construction of the \emph{monster tower}, a tower of 
\emph{monster spaces} $S(k)$
over a surface $S$;
each $S(k)$ is a space of dimension $k+2$.
These spaces are universal for Goursat distributions: given a germ
of Goursat distribution of rank $2$ and corank $k$,
one can find a point on $S(k)$
for which the germ of the 
\emph{focal distribution} $\Delta(k)$ at that point is \emph{equivalent} to the specified germ.
The spaces $S(k)$ are naturally stratified by their
\emph{divisors at infinity} and the prolongation of these divisors;
using them we introduce \emph{RVT code words}, 
which agree with the Goursat code words if one avoids the
divisor at infinity $I_2$.
Working with \emph{focal curve germs}, we defined various 
\emph{structural invariants}\,---\,those shown in the top five boxes 
of Figure~\ref{invdiagram}\,---\,and explicated recursive schemes for calculating them.
We introduced the notion of \emph{Goursat invariants};
the present paper is devoted to further study of these invariants.

\section{Charts, coordinates, focal vector fields} \label{ccvf}

We will use the standard charts on the monster space $S(k)$ explained in Section~8 of \cite{MR4887124}.
We repeat here, mostly verbatim, our descriptions of those charts.

We begin with a specified chart $U$ on $S$ with coordinates
$\rr_0$ and $\nn_0$.
On $S(k)$ there are $2^k$ standard charts over $U$, each of which is 
a copy of $U \times \AAA^k$,
and on each such chart there are $k+2$ coordinate functions, namely
$\rr_0$ and $\nn_0$ together with
$k$ affine coordinates.
At each level $j$,
by a recursive procedure, 
two of these coordinates are
designated as \emph{active coordinates}.
One is the \emph{new coordinate}
$\nn_j$
and the other is the \emph{retained coordinate}
$\rr_j$.
In addition, for $j>0$,
a third coordinate is designated as the \emph{deactivated coordinate}
$\dd_j$.

To describe the recursive procedure,
we begin with a standard chart on $U(j)$ with coordinates
 $\nn_{j}$, $\rr_{j}$, and $\dd_{j}$
together with $j-1$ unnamed coordinates.
At each point of the chart,
the fiber of $\Delta(j)$
(except for the zero vector)
consists of tangent vectors for which
either the restriction of the differential $d\nn_{j}$ or 
that of
$d\rr_{j}$ is nonzero.
Create a standard chart at the next level by choosing one of the following two options:
\begin{itemize}
\item
Assuming the restriction of $d\rr_{j}$ is nonzero, let
$\nn_{j+1}=d\nn_{j}/d\rr_{j}$; then set
$\rr_{j+1}=\rr_{j}$ and $\dd_{j+1}=\nn_{j}$.
We call this the \emph{ordinary choice}.
\item
Assuming the restriction of $d\nn_{j}$ is nonzero, let
$\nn_{j+1}=d\rr_{j}/d\nn_{j}$; then set
$\rr_{j+1}=\nn_{j}$ and $\dd_{j+1}=\rr_{j}$.
We call this the \emph{inverted choice}.
\end{itemize}
In every standard chart the names of the coordinates are $\rr_0$, $\nn_0$, $\nn_1$,
\dots, $\nn_j$, but their meaning depends on the chart.
The charts are given names such as $\cC(oiiooi)$, where each symbol $o$
or $i$ records which choice has been made, either ordinary or inverted.

In an alternative procedure for naming coordinates,
we begin with coordinates $x^{(0)}=x$ and $y^{(0)}=y$.
At each level, the two active coordinates will be $x^{(i)}$ and
$y^{(j)}$, for some nonnegative integers $i$ and $j$.
If we create our chart at the next level by
designating $x^{(i)}$ as the retained coordinate,
then the new active coordinate is $y^{(j+1)}=dy^{(j)}/dx^{(i)}$;
if we designate $y^{(j)}$ as the retained coordinate,
then the new active coordinate is $x^{(i+1)}=dx^{(i)}/dy^{(j)}$.

As remarked in the earlier paper,
the focal sheaf $\Delta(k)$ consists of those vector fields
annihilated by the differential form
\begin{equation}\label{adjoined}
d\dd_i-\nn_i \, d\rr_i
\end{equation}
for $i=1,\dots,k$;
we call them \emph{focal vector fields}.
Since $k$ is fixed, we abbreviate to $\Delta$.
Similarly, the other sheaves $\Delta_i$ in the Lie square sequence,
which (by equation (5.3) of \cite{MR4887124}) are extensions
of focal sheaves from spaces lower in the tower,
consist of vector fields annihilated by (\ref{adjoined})
for $i=1,\dots,k-i+1$.

On each standard chart we now define two sequences of 
\emph{standard vector fields}.
The sequence $v_0$, $v_1$, \dots, $v_k$ is defined by
\begin{equation} \label{defvi}
v_i = \frac{\partial}{\partial \nn_i}.
\end{equation}
The last vector field $v_k$ is vertical,
i.e., its projection to $S(k-1)$ is the zero vector field.
For $1\le i \le k-1$,
each  $v_{i}$ projects to a vertical vector field
at level $i$ (and hence to the zero vector field at level $i-1$).
The sequence $f_0$, $f_1$, \dots, $f_k$ is defined by
recursion.
To begin, set 
\begin{equation}\label{deff0}
f_0=\frac{\partial}{\partial \rr_0}\,.
\end{equation}
If at level $i$ we make the ordinary choice, then we define
\begin{equation} \label{ordinarychoice}
f_{i} = f_{i-1} + \nn_{i} \, v_{i-1}\,.
\end{equation}
If at level $i$ we make the inverted choice, then we define
\begin{equation} \label{invertedchoice}
f_{i} = \nn_{i} f_{i-1} + v_{i-1}\,.
\end{equation}
The last vector field $f_k$ is focal, and each $f_{i}$ projects to a focal vector field
at level $i$.
\begin{example} \label{focalvfexample}
In chart $\cC(ooioii)$ we have
\begin{align*}
f_0 &= f_0 \\
f_1 &= f_0 + \nn_1 v_0 \\
f_2 &= f_0 + \nn_1 v_0 + \nn_2 v_1 \\
f_3 &= \nn_3 f_0 + \nn_1 \nn_3 v_0 + \nn_2 \nn_3 v_1 + v_2 \\
f_4 &= \nn_3 f_0 + \nn_1 \nn_3 v_0 + \nn_2 \nn_3 v_1 + v_2 + \nn_4 v_3 \\
f_5 &= \nn_3 \nn_5 f_0 + \nn_1 \nn_3 \nn_5 v_0 + \nn_2 \nn_3 \nn_5 v_1 + \nn_5 v_2 + \nn_4 \nn_5 v_3 + v_4 \\
f_6 &= \nn_3 \nn_5 \nn_6 f_0 + \nn_1 \nn_3 \nn_5 \nn_6 v_0 + \nn_2 \nn_3 \nn_5 \nn_6 v_1 + \nn_5 \nn_6 v_2 + \nn_4 \nn_5 \nn_6 v_3 +  \nn_6 v_4 +v_5 \,.
\end{align*}
If we prefer the alternative coordinate names, we use $x$, $y$, and
\begin{align*}
y'&=dy/dx \\
y''&=dy'/dx \\
x'&=dx/dy'' \\
x''&=dx'/dy'' \\
y^{(3)}&=dy''/dx'' \\
x^{(3)}&=dx''/dy^{(3)}.
\end{align*}
In these coordinates we have
\begin{align*}
f_0 &= \partial/\partial x \\
v_0 &= \partial/\partial y \\
v_1 &= \partial/\partial y' \\
v_2 &= \partial/\partial y'' \\
v_3 &= \partial/\partial x' \\
v_4 &= \partial/\partial x'' \\
v_5 &= \partial/\partial y^{(3)} \\
v_6 &= \partial/\partial x^{(3)} \\
\end{align*}
and
\begin{align*}
f_1 &= f_0 + y' v_0 \\
f_2 &= f_0 + y' v_0 + y'' v_1 \\
f_3 &= x' f_0 + y' x' v_0 + y'' x' v_1 + v_2 \\
f_4 &= x' f_0 + y' x' v_0 + y'' x' v_1 + v_2 + x'' v_3 \\
f_5 &= x' y^{(3)} f_0 + y' x' y^{(3)} v_0 + y'' x' y^{(3)} v_1 + y^{(3)} v_2 + x'' y^{(3)} v_3 + v_4 \\
f_6 &= x' y^{(3)} x^{(3)} f_0 + y' x' y^{(3)} x^{(3)} v_0 + y'' x' y^{(3)} x^{(3)} v_1 + y^{(3)} x^{(3)} v_2 + x'' y^{(3)} x^{(3)} v_3 +  x^{(3)} v_4 +v_5 \,. \\
\end{align*}
\end{example}

The recursive formulas (\ref{ordinarychoice}) and (\ref{invertedchoice})
lead to an explicit formula for $f_i$ in terms of this basis.
To state it, we let $\IP$ denote the set of positions
at which we have made the inverted choice,
and then define (for $1\le i \le j\le k$)
\begin{equation} \label{defa}
a_{ij} = \displaystyle\prod_{\substack{h \in \IP \\ 
i+1\le h\le j}}\nn_h  
\end{equation}
and (for $0\le i < j \le k$)
\begin{equation}  \label{defb}
b_{ij} =
\begin{cases}
a_{i+1,\,j} \quad\qquad \text{if $i+1 \in \IP$}\\
\nn_{i+1} a_{i+1,\,j}  \quad \text{if $i+1 \notin \IP$}. 
\end{cases}
\end{equation}
By an easy induction on $j$ we have
\begin{equation} \label{fjlc}
f_{j} = a_{1j}  f_0 + \sum_{i=0}^{j-1} b_{ij} v_i.
\end{equation}

The vector fields
\begin{equation} \label{deltaibasis}
f_{k-i+1}, v_{k-i+1},v_{k-i+2}, \dots, v_{k}
\end{equation}
form a basis of sections 
of
$\Delta_i$.
One extreme case is the focal bundle
$\Delta$, whose sections are spanned by $f_k$
and $v_k$.
At the other extreme is the full tangent bundle 
$TS(k)$, whose sections are spanned by
$f_{0}, v_{0},v_{1}, \dots, v_{k}$.
For future reference, we note an alternative
basis for $\Delta_i$.

\begin{lemma} \label{altbasis}
For each $i$, there is an alternative basis for $\Delta_i$:
\begin{enumerate}[(a)]
\item
If $k-i+2 \notin \IP$, 
then $f_{k-i+2}, v_{k-i+1}, v_{k-i+2}, \dots, v_{k}$
is an alternative basis.
\item
If $k-i+2 \in \IP$, 
then $f_{k-i+2}, f_{k-i+1}, v_{k-i+2}, \dots, v_{k}$
is an alternative basis.
\end{enumerate}
\end{lemma}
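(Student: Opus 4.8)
The plan is to obtain each alternative basis from the standard basis $f_{k-i+1}, v_{k-i+1}, \dots, v_k$ of $\Delta_i$ displayed in (\ref{deltaibasis}) by a single elementary change of generators dictated by the recursion at level $k-i+2$. Throughout write $\ell = k-i+1$, so that the standard basis is $f_\ell, v_\ell, v_{\ell+1}, \dots, v_k$ and the replacement vector field in both cases is $f_{\ell+1}$. In each case there are two things to check: that the proposed $(i+1)$-tuple really consists of sections of $\Delta_i$, and that it spans the same $\cO_{S(k)}$-module as the standard basis via an invertible transition matrix, whence it too is a basis.

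Membership is the same in both cases. The vector field $f_{\ell+1} = f_{k-i+2}$ is the leading member of the standard basis (\ref{deltaibasis}) written for $\Delta_{i-1}$, and $\Delta_{i-1} \subseteq \Delta_i$, so $f_{\ell+1}$ is a section of $\Delta_i$; the remaining entries $f_\ell$ and $v_{\ell+1}, \dots, v_k$ (and, in case (a), $v_\ell$) already appear in (\ref{deltaibasis}). The boundary values of $i$\,---\,where $f_{\ell+1}$ reads $f_1$ and $\Delta_{i-1}$ reads $\Delta_k$, or where $i$ is small\,---\,cause no difficulty.

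For case (a), the hypothesis $k-i+2 \notin \IP$ means that level $\ell+1$ is the ordinary choice, so (\ref{ordinarychoice}) gives $f_{\ell+1} = f_\ell + \nn_{\ell+1} v_\ell$. Replacing $f_\ell$ by $f_{\ell+1}$ in (\ref{deltaibasis}) while retaining $v_\ell, \dots, v_k$ is therefore a unipotent change of generators, with inverse $f_\ell = f_{\ell+1} - \nn_{\ell+1} v_\ell$, so $f_{\ell+1}, v_\ell, \dots, v_k$ is a basis. For case (b), the hypothesis $k-i+2 \in \IP$ means level $\ell+1$ is the inverted choice, so (\ref{invertedchoice}) gives $f_{\ell+1} = \nn_{\ell+1} f_\ell + v_\ell$, i.e.\ $v_\ell = f_{\ell+1} - \nn_{\ell+1} f_\ell$. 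Replacing $v_\ell$ by $f_{\ell+1}$ in (\ref{deltaibasis}) while retaining $f_\ell$ and $v_{\ell+1}, \dots, v_k$ has a transition matrix whose only nontrivial block is $\bigl(\begin{smallmatrix} \nn_{\ell+1} & 1 \\ 1 & 0 \end{smallmatrix}\bigr)$, of determinant $-1$; hence $f_{\ell+1}, f_\ell, v_{\ell+1}, \dots, v_k$ is a basis.

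I do not expect a genuine obstacle here: this is a two-line manipulation of the defining recursion (\ref{ordinarychoice})--(\ref{invertedchoice}), and the only point meriting a moment's care is confirming that $f_{k-i+2}$ lies in $\Delta_i$ and not merely in a later term of the Lie square sequence\,---\,which is precisely the containment $\Delta_{i-1} \subseteq \Delta_i$. Alternatively, one could bypass transition matrices altogether by using the explicit expansion (\ref{fjlc}) to read off the leading $v_i$- and $f_0$-coefficients and check linear independence directly, but the elementary-operations argument is shorter.
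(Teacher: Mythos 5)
Your proof is correct and follows the same route as the paper: both use the recursions (\ref{ordinarychoice}) and (\ref{invertedchoice}) at level $k-i+2$ to exchange one element of the standard basis (\ref{deltaibasis}) for $f_{k-i+2}$. You merely spell out the elementary-operations/transition-matrix detail that the paper leaves implicit.
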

\begin{proof}
In case (a),
formula~
(\ref{ordinarychoice})
tells us that we may replace the basis
element $f_{k-i+1}$ by $f_{k-i+2}$.
Similarly in case (b),
formula~
(\ref{invertedchoice})
tells us that we may replace the basis
element $v_{k-i+1}$ by $f_{k-i+2}$.
\end{proof}

\section{Lie derivatives and Lie brackets} \label{ldlb}

In this section we work out some needed facts
about Lie derivatives and Lie brackets,
using the standard coordinate functions
and standard vector fields of Section~\ref{ccvf}.
As we will see, the formulas for Lie brackets are
somewhat irregular. In Lemma~\ref{omnibus}, however,
we will identify a particular basis mixing the standard
vertical and focal vector fields (the precise
mixture being determined by the choice of chart);
for this basis there is a uniform formula,
as expressed in property~(\ref{gbrackets}) of the lemma.
This basis will be used in Theorem~\ref{structuretheorem2}
to characterize the sections of the small growth sheaves.

If $z$ is a vector field on $S(k)$ and $a$ is a smooth function,
we use the usual notation $z(a)$ for 
the Lie derivative. Looking at a specified standard chart
on $S(k)$, we now present two lemmas about specific Lie derivatives.

\begin{lemma} \label{bijremark}
For $i<j$ we have that 
\begin{equation}
 f_j(\nn_i) = b_{ij}.
\end{equation}
\end{lemma}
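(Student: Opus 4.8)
The plan is to prove this by the same induction on $j$ that established the linear combination formula~(\ref{fjlc}), since the claim $f_j(\nn_i) = b_{ij}$ is really just a bookkeeping consequence of that formula together with the recursions (\ref{ordinarychoice}) and (\ref{invertedchoice}). First I would fix $i$ and induct on $j$, starting from the base case $j = i+1$. There the recursion gives either $f_{i+1} = f_i + \nn_{i+1} v_i$ (ordinary choice at level $i+1$) or $f_{i+1} = \nn_{i+1} f_i + v_i$ (inverted choice at level $i+1$). Applying the vector field to $\nn_i$ and using that $f_i$ and all earlier standard vector fields do not involve $\partial/\partial\nn_i$ while $v_i = \partial/\partial\nn_i$, we get $f_{i+1}(\nn_i) = \nn_{i+1}$ in the ordinary case and $f_{i+1}(\nn_i) = 1$ in the inverted case. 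Comparing with the definition~(\ref{defb}) of $b_{i,i+1}$ — which is $\nn_{i+1} a_{i+1,i+1} = \nn_{i+1}$ when $i+1 \notin \IP$, and $a_{i+1,i+1} = 1$ (empty product) when $i+1 \in \IP$ — this matches exactly.

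For the inductive step, suppose $f_{j-1}(\nn_i) = b_{i,j-1}$ and examine the passage from level $j-1$ to level $j$. If level $j$ is the ordinary choice, then $f_j = f_{j-1} + \nn_j v_{j-1}$, so $f_j(\nn_i) = f_{j-1}(\nn_i) + \nn_j v_{j-1}(\nn_i) = b_{i,j-1} + 0 = b_{i,j-1}$, since $i < j-1$ forces $v_{j-1}(\nn_i) = 0$ (and if $i = j-1$ we are back in the base case). On the other hand, $a_{i+1,j} = a_{i+1,j-1}$ when $j \notin \IP$, so $b_{ij} = b_{i,j-1}$ as well. If level $j$ is the inverted choice, then $f_j = \nn_j f_{j-1} + v_{j-1}$, giving $f_j(\nn_i) = \nn_j f_{j-1}(\nn_i) = \nn_j\, b_{i,j-1}$; and since $j \in \IP$ we have $a_{i+1,j} = \nn_j\, a_{i+1,j-1}$, hence $b_{ij} = \nn_j\, b_{i,j-1}$, matching again. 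One small point to watch is the case $i+1 = j$ versus $i+1 < j$ in reconciling the two branches of~(\ref{defb}); but the multiplicative factor $\nn_j$ (or its absence) acts uniformly on whichever branch the definition of $b_{i,j-1}$ sits in, so the induction goes through cleanly.

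I do not expect a genuine obstacle here — the only mild subtlety is making sure the edge case $i+1 = j$ is consistent with the general step, and confirming that $v_{j-1}(\nn_i) = 0$ for $i < j-1$ (immediate, since the $\nn$'s are independent coordinates and $v_{j-1} = \partial/\partial\nn_{j-1}$). An alternative, even shorter route would be to derive the identity directly from~(\ref{fjlc}): since $f_j = a_{1j} f_0 + \sum_{\ell=0}^{j-1} b_{\ell j} v_\ell$ and $f_0(\nn_i) = 0$, $v_\ell(\nn_i) = \delta_{\ell i}$ for all $\ell$, we immediately read off $f_j(\nn_i) = b_{ij}$. I would present this second argument as the main proof — it is a one-line consequence of the basis formula~(\ref{fjlc}) that precedes the lemma — and relegate the inductive verification to a parenthetical remark for readers who want to see it without invoking~(\ref{fjlc}).
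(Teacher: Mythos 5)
Your proposal is correct, and the argument you single out as the main proof — reading $f_j(\nn_i)=b_{ij}$ directly off formula~(\ref{fjlc}) since $f_0(\nn_i)=0$ and $v_\ell(\nn_i)=\delta_{\ell i}$ — is exactly the paper's proof, which states the lemma as an immediate consequence of~(\ref{fjlc}). The inductive verification you sketch is a sound but unnecessary supplement.
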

\begin{proof}
This is an immediate consequence of formula~(\ref{fjlc}).
\end{proof}

\begin{lemma} \label{vkfkmonomial}
Suppose
that the function $a$ is a monomial in the standard 
coordinates. Then $v_k(a)$ and $f_k(a)$
are linear combination of monomials with positive coefficients.
\end{lemma}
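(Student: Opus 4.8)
The plan is to read off both Lie derivatives directly from the explicit coordinate expansions of the standard vector fields, and then simply track signs. For $v_k$ there is essentially nothing to prove: by (\ref{defvi}) we have $v_k=\partial/\partial\nn_k$, so if $a=\rr_0^{c}\nn_0^{e_0}\nn_1^{e_1}\cdots\nn_k^{e_k}$ with all exponents nonnegative integers, then $v_k(a)$ is either $0$, when $\nn_k$ does not occur in $a$, or the single monomial $e_k\,\rr_0^{c}\nn_0^{e_0}\cdots\nn_{k-1}^{e_{k-1}}\nn_k^{e_k-1}$, whose coefficient $e_k$ is a positive integer.

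For $f_k$ I would invoke formula~(\ref{fjlc}) with $j=k$, namely $f_k=a_{1k}f_0+\sum_{i=0}^{k-1}b_{ik}v_i$, and then use $f_0=\partial/\partial\rr_0$ and $v_i=\partial/\partial\nn_i$ to write
\[
f_k(a)=a_{1k}\,\frac{\partial a}{\partial\rr_0}+\sum_{i=0}^{k-1}b_{ik}\,\frac{\partial a}{\partial\nn_i}.
\]
The two facts that make this work are: first, by the definitions (\ref{defa}) and (\ref{defb}) each of the coefficients $a_{1k}$ and $b_{ik}$ is a (possibly empty) product of coordinate functions $\nn_h$, hence a monomial with coefficient $1$; and second, the partial derivative of a monomial with respect to any coordinate is either $0$ or a monomial whose coefficient is a positive integer. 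Multiplying a monomial with positive coefficient by a monomial with coefficient $1$ leaves the coefficient a positive integer, so every summand on the right-hand side is either $0$ or a monomial with positive coefficient.

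The only point that needs a word of justification is that combining the summands cannot produce cancellation, and this is immediate: all the nonzero summands carry strictly positive coefficients, so collecting like monomials only adds positive integers and can never yield $0$. This gives the claim for $f_k$. I do not expect any real obstacle here; the substance of the lemma is precisely this absence of cancellation, which is what will later allow us to guarantee that certain leading terms survive (the mechanism exploited in Section~\ref{pathway}), and the proof is short enough that the plan above is essentially complete.
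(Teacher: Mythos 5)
Your proof is correct and follows the same route as the paper's: express $f_k$ via formula~(\ref{fjlc}) in the basis of partial derivatives $f_0, v_0, \dots, v_k$, and note that the coefficients $a_{1k}$, $b_{ik}$ from (\ref{defa}) and (\ref{defb}) are monomials with positive coefficients, so no cancellation can occur. Your treatment is just a slightly more explicit write-out of the same argument.
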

\begin{proof}
Formulas (\ref{defvi}) and (\ref{deff0}) tell us that 
$f_0, v_0, v_1, \dots, v_k$
form a basis of vector fields, namely
the partial derivatives 
with respect to the
coordinate system $\rr_0, \nn_0, \dots, \nn_k$.
Formula (\ref{fjlc}) tells us how to express
$f_k$ as a linear combination of this basis,
and definitions (\ref{defa}) and (\ref{defb})
guarantee that the coefficients in this linear combination are
monomials with positive coefficients.
\end{proof}

Turning to Lie brackets,
we note that
we will make frequent use of the
product rule
for a function $a$ and a pair $w$, $z$ of vector fields:
\begin{equation} \label{productrule}
[w,az]=a[w,z]+w(a)z\,.
\end{equation}
Looking at our standard vector fields, 
we first present an example.

\begin{example}  \label{standardbracketsexample}
Continuing Example~\ref{focalvfexample},
here is a table of Lie brackets in the chart
$\cC(ooioii)$, for which $\IP=\{3,5,6\}$.
The row heading indicates the vector field used in the left slot, e.g., $[v_1,f_1]=v_0$.
\begin{center}
\begin{tabular}{ c || c c c c c c c}
       & $f_0$ & $f_1$ & $f_2$ & $f_3$ & $f_4$ & $f_5$ & $f_6$ \\ \hline\hline
$v_0$ & $0$ & $0$ & $0$ & $0$ & $0$ & $0$ & $0$ \\
$v_1$ & $0$ & $v_0$ & $v_0$ & $\nn_3 v_0$ &  $\nn_3 v_0$ &  $\nn_3 \nn_5 v_0$ & $\nn_3 \nn_5 \nn_6 v_0$ \\
$v_2$ & $0$ & $0$ & $v_1$ & $\nn_3 v_1$ &  $\nn_3 v_1$ &  $\nn_3 \nn_5 v_1$ & $\nn_3 \nn_5 \nn_6 v_1$ \\
$v_3$ & $0$ & $0$ & $0$ & $f_2$ & $f_2$ & $\nn_5 f_2$ & $\nn_5 \nn_6 f_2$ \\
$v_4$ & $0$ & $0$ & $0$ & $0$ & $v_3$ & $\nn_5 v_3$ & $\nn_5 \nn_6 v_3$ \\
$v_5$ & $0$ & $0$ & $0$ & $0$ & $0$ & $f_4$ & $\nn_6 f_4$ \\
$v_6$ & $0$ & $0$ & $0$ & $0$ & $0$ & $0$ & $f_5$ \\ \hline
$f_0$ & $0$ & $0$ & $0$ & $0$ & $0$ & $0$ & $0$ \\
$f_1$ & $0$ & $0$ & $-\nn_2 v_0$ & $-\nn_2 \nn_3 v_0$ & $-\nn_2 \nn_3 v_0$ &$-\nn_2 \nn_3 \nn_5 v_0$ & $-\nn_2 \nn_3 \nn_5 \nn_6 v_0$ \\
$f_2$ & $0$ & $\nn_2 v_0$ & $0$ &  $-v_1$ & $-v_1$ & $-\nn_5 v_1$ & $-\nn_5 \nn_6 v_1$ \\
$f_3$ & $0$ & $\nn_2 \nn_3 v_0$ & $v_1$ & $0$ & $-\nn_4 f_2$ & $-\nn_4 \nn_5 f_2$ & $-\nn_4 \nn_5 \nn_6 f_2$ \\
$f_4$ & $0$ & $\nn_2 \nn_3 v_0$ & $v_1$ & $\nn_4 f_2$ & $0$ & $-v_3$ & $-\nn_6 v_3$ \\
$f_5$ & $0$ & $\nn_2 \nn_3 \nn_5 v_0$ & $\nn_5 v_1$ & $\nn_4 \nn_5 f_2$ & $v_3$ & $0$ & $-f_4$ \\
$f_6$ & $0$ &$\nn_2 \nn_3 \nn_5 \nn_6 v_0$ & $\nn_5 \nn_6 v_1$ & $\nn_4 \nn_5 \nn_6 f_2$ & $\nn_6 v_3$ & $f_4$ & $0$
\end{tabular}

\end{center}
\end{example}

We now explain how to obtain the sort of explicit formulas appearing in Example~\ref{standardbracketsexample}.
To begin,
an elementary computation shows that, for each $j$,
\begin{equation} \label{vacuous}
[v_0,f_j]=[f_0,f_j]=0.
\end{equation}
The following two lemmas give the other formulas.

\begin{lemma} \label{lblemma}
For $i>j$ we have
\begin{equation} \label{vifjvanish}
[v_i,f_j] =0.
\end{equation}
For $1 \le i\le j \le k$,
\begin{equation} \label{LBvf0}
[v_i,f_j] = 
\begin{cases}
a_{ij} f_{i-1} \quad \text{if $i\in \IP$}\\
a_{ij} v_{i-1} \quad \text{if $i\notin \IP$}.
\end{cases}
\end{equation}
\end{lemma}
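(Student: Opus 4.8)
\textbf{Proof proposal for Lemma~\ref{lblemma}.}

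The plan is to reduce everything to the linear-combination formula~(\ref{fjlc}) together with the product rule~(\ref{productrule}) and the fact that $f_0, v_0, \dots, v_k$ are literally the coordinate partials, so that their mutual Lie brackets all vanish. Writing $f_j = a_{1j} f_0 + \sum_{\ell=0}^{j-1} b_{\ell j} v_\ell$, for any $v_i$ we get
\[
[v_i, f_j] = v_i(a_{1j})\, f_0 + \sum_{\ell=0}^{j-1} v_i(b_{\ell j})\, v_\ell,
\]
since the bracket of $v_i$ with each constant-coefficient basis field is zero. Now $v_i = \partial/\partial\nn_i$, and the coefficients $a_{1j}$, $b_{\ell j}$ are monomials in the $\nn_h$ with $h$ ranging over indices $\le j$ that lie in $\IP$ (for $a$) or are adjusted by one extra factor $\nn_{\ell+1}$ (for $b$). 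So the proof is essentially a bookkeeping check: differentiate those monomials with respect to $\nn_i$ and recognize the result.

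The first assertion, $[v_i,f_j]=0$ for $i>j$, is then immediate: by~(\ref{defa}) and~(\ref{defb}) none of the coefficients $a_{1j}$ or $b_{\ell j}$ (for $\ell < j$) involves $\nn_i$ when $i>j$, so every $v_i$-derivative above vanishes. For the second assertion, fix $1\le i\le j$. I would split into the two cases according to whether $i\in\IP$. Consider first $i\notin\IP$. Then $\nn_i$ never appears in any $a$-product (those only collect factors indexed by $\IP$), so $v_i(a_{1j})=0$ and $v_i(a_{\ell+1,j})=0$ for all $\ell$. Looking at $b_{\ell j}$: by~(\ref{defb}), $\nn_i$ can enter $b_{\ell j}$ only through the prefactor $\nn_{\ell+1}$ in the case $\ell+1\notin\IP$, and then only when $\ell+1=i$, i.e. $\ell=i-1$. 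In that single case $b_{i-1,j}=\nn_i\, a_{ij}$ with $a_{ij}$ independent of $\nn_i$, so $v_i(b_{i-1,j}) = a_{ij}$ and all other terms vanish; hence $[v_i,f_j]=a_{ij}v_{i-1}$, as claimed. Now suppose $i\in\IP$. Then $\nn_i$ appears, linearly, in exactly those $a$-products whose index range $[\,\cdot+1, j\,]$ contains $i$. Concretely $a_{1j}=\nn_i\cdot(\text{rest})$ and, for $\ell\le i-1$ with $\ell+1\in\IP$, $b_{\ell j}=a_{\ell+1,j}=\nn_i\cdot(\text{rest})$, while for $\ell\le i-1$ with $\ell+1\notin\IP$, $b_{\ell j}=\nn_{\ell+1}a_{\ell+1,j}$ again contributes a single factor of $\nn_i$; and $b_{\ell j}$ for $\ell\ge i$ does not involve $\nn_i$ (since $a_{\ell+1,j}$ only uses indices $>\ell\ge i$, and the possible prefactor $\nn_{\ell+1}$ has $\ell+1>i$). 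Differentiating, $v_i$ strips off the factor $\nn_i$ from each of $a_{1j}$ and $b_{0j},\dots,b_{i-1,j}$, producing exactly $a_{1,i-1}f_0+\sum_{\ell=0}^{i-2} b_{\ell,i-1}v_\ell + (\text{the }\ell=i-1\text{ term})$; comparing with~(\ref{fjlc}) applied at level $i-1$, I claim this sum is precisely $a_{ij} f_{i-1}$, where the common leftover factor is $a_{ij}=\prod_{h\in\IP,\ i+1\le h\le j}\nn_h$. This last identification is the one spot requiring care: one must match the leftover monomial after differentiation term-by-term against the coefficients $a_{1,i-1}$ and $b_{\ell,i-1}$ in the expansion of $f_{i-1}$, including correctly handling the $\ell=i-1$ boundary term (where $b_{i-1,j}$ either equals $a_{ij}$ or $\nn_i a_{ij}$ depending on whether $i\in\IP$, and here $i\in\IP$ so it equals $a_{ij}$, contributing $a_{ij}v_{i-1}$ — which matches, since $v_{i-1}$ is the ``$f_0$-coefficient times $f_0$'' slot... wait, rather it is the coefficient-of-$v_{i-1}$ slot, consistent with $f_{i-1}$ having $v_{i-1}$-coefficient... let me just say: the identification is a direct comparison of two monomial expansions).

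The main obstacle is exactly that final bookkeeping: verifying that after applying $v_i=\partial/\partial\nn_i$ the surviving factors reassemble into $a_{ij}$ times the standard expansion~(\ref{fjlc}) of $f_{i-1}$ (resp.\ the single field $v_{i-1}$ when $i\notin\IP$). This is not deep, but it is fiddly, and the cleanest route is probably an induction on $j-i$ using the recursions~(\ref{ordinarychoice}) and~(\ref{invertedchoice}) directly rather than the closed forms: for $j=i$ one checks $[v_i,f_i]$ by hand from the defining recursion at level $i$ (obtaining $f_{i-1}$ if $i\in\IP$, $v_{i-1}$ if not, since $a_{ii}=1$), and for $j>i$ one writes $f_j$ in terms of $f_{j-1}$ via the level-$j$ recursion, applies the product rule~(\ref{productrule}), uses $v_i(\nn_j)=0$ (as $i<j$) and $v_i(f_{j-1})$-type brackets from the inductive hypothesis, and collects the factor $\nn_j$ (when $j\in\IP$) or observes its absence (when $j\notin\IP$) to pass from $a_{i,j-1}$ to $a_{ij}$. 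I would present it this way, as the induction makes the emergence of the product $a_{ij}$ transparent and sidesteps the need to stare at~(\ref{defa})--(\ref{defb}).
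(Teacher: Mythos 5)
Your recommended write-up --- induction using the recursions (\ref{ordinarychoice}) and (\ref{invertedchoice}), with the base case $j=i$ checked directly, the product rule (\ref{productrule}), the vanishing $v_i(\nn_j)=0$ for $i<j$, and the factor $\nn_j$ collected (or absent) to pass from $a_{i,j-1}$ to $a_{ij}$ --- is exactly the paper's proof, and your treatment of the case $i>j$ via (\ref{fjlc}) also matches. Your alternative sketch, differentiating the closed-form coefficients of (\ref{fjlc}) directly, is a workable variant but you leave its $i\in\IP$ bookkeeping unfinished, and you correctly fall back on the induction, so the proposal is essentially the same as the paper's argument.
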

\begin{proof}
In (\ref{fjlc}) we see that $f_j$ is a linear combination of
$f_0$ and $v_0, \dots, v_{j-1}$,
from which (\ref{vifjvanish}) is immediate.

We prove the formulas of (\ref{LBvf0}) by induction on $j$.
In the inductive step, we
first suppose that $j\in\IP$.
Here (\ref{invertedchoice}) and the product rule (\ref{productrule}) give us that
\begin{equation} \label{vifj1}
[v_i,f_j]
=[v_i,\nn_j f_{j-1}+v_{j-1}] 
= v_i(\nn_j)f_{j-1}+\nn_j[v_i,f_{j-1}].
\end{equation}
If $i<j$, then the first term on the right of (\ref{vifj1})
vanishes, and using
(\ref{defa})
 we find that
\begin{equation*}
[v_i,f_j]
= \nn_j[v_i,f_{j-1}]
=
\begin{cases}
\nn_j a_{i,j-1} f_{i-1} = a_{ij} f_{i-1} \quad \text{if $i\in\IP$}
\\
\nn_j a_{i,j-1} v_{i-1} = a_{ij} v_{i-1} \quad \text{if $i\notin\IP$}.
\end{cases}
\end{equation*}
In the remaining case $i=j$, the last term on the right of (\ref{vifj1})
vanishes (as a consequence of (\ref{vifjvanish})),
and thus we have 
\[
[v_j,f_j]=f_{j-1}=a_{jj}f_{j-1}.
\]

Now suppose that $j\notin\IP$.
Here (\ref{ordinarychoice}) and the product rule (\ref{productrule}) give us that
\begin{equation} \label{vifj2}
[v_i,f_j]
=[v_i,f_{j-1}+\nn_{j-1}v_{j-1}] 
= [v_i,f_{j-1}]+v_i(\nn_j)v_{j-1}.
\end{equation}
If $i<j$, then the second term on the right of (\ref{vifj2})
vanishes, so that
\begin{equation*}
[v_i,f_j]
= [v_i,f_{j-1}]
=
\begin{cases}
a_{i,j-1} f_{i-1} = a_{ij} f_{i-1} \quad \text{if $i\in\IP$}
\\
a_{i,j-1} v_{i-1} = a_{ij} v_{i-1} \quad \text{if $i\notin\IP$}.
\end{cases}
\end{equation*}
In the remaining case $i=j$, the first term on the right of (\ref{vifj2})
vanishes, and thus we have 
\[
[v_j,f_j]=v_{j-1}=a_{jj}v_{j-1}.
\]
\end{proof}
\begin{lemma}
For $1 \le i < j \le k$
\begin{equation} \label{LBvf1}
[f_i,f_j] =
\begin{cases}
-b_{ij} f_{i-1} \quad \text{if $i\in \IP$}\\
-b_{ij} v_{i-1} \quad \text{if $i\notin \IP$}.
\end{cases}
\end{equation}
For $0 \le i < j \le k$
\begin{equation} \label{LBvf2}
\begin{cases}
\nn_{i+1}[f_i, f_j] + [v_i, f_j]=0 \quad \text{if $i+1\in \IP$}\\
[f_i, f_j] + \nn_{i+1}[v_i, f_j]=0\quad \text{if $i+1\notin \IP$}.
\end{cases}
\end{equation}
\end{lemma}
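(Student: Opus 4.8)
The plan is to establish (\ref{LBvf1}) first, by induction on $j$ with $i\ge 1$ fixed, and then to deduce (\ref{LBvf2}) from (\ref{LBvf1}) together with Lemma~\ref{bijremark}.

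For the base case $j=i+1$ of (\ref{LBvf1}), I would use whichever of (\ref{ordinarychoice}), (\ref{invertedchoice}) applies to write $f_{i+1}$ as a combination of $f_i$ and $v_i$, expand $[f_i,f_{i+1}]$ by the product rule (\ref{productrule}), drop the coefficient-derivative term because $f_i(\nn_{i+1})=0$ by (\ref{fjlc}), and evaluate the surviving bracket $[f_i,v_i]=-[v_i,f_i]$ by Lemma~\ref{lblemma} (with $a_{ii}=1$); the definition (\ref{defb}) of $b_{i,i+1}$ then matches the outcome in both chart cases. For the inductive step $j\ge i+2$, I would expand $f_j$ via (\ref{ordinarychoice}) or (\ref{invertedchoice}) in terms of $f_{j-1}$ and $v_{j-1}$: in $[f_i,f_j]$ the coefficient-derivative term $f_i(\nn_j)$ vanishes, and the term $[f_i,v_{j-1}]=-[v_{j-1},f_i]$ vanishes by (\ref{vifjvanish}) since $j-1>i$, so $[f_i,f_j]$ collapses to $[f_i,f_{j-1}]$ in the ordinary case and to $\nn_j[f_i,f_{j-1}]$ in the inverted case. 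The inductive hypothesis, combined with the identities $b_{ij}=b_{i,j-1}$ when $j\notin\IP$ and $b_{ij}=\nn_j\,b_{i,j-1}$ when $j\in\IP$ --- both immediate from (\ref{defa}) and (\ref{defb}) --- completes (\ref{LBvf1}).

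For (\ref{LBvf2}), I would observe that in either chart case the recursion gives $f_{i+1}=\mu f_i+\nu v_i$ with $\{\mu,\nu\}=\{1,\nn_{i+1}\}$, so that the claimed identity is precisely $\mu[f_i,f_j]+\nu[v_i,f_j]=0$. Expanding $[f_{i+1},f_j]$ by bilinearity and the product rule gives $\mu[f_i,f_j]+\nu[v_i,f_j]=[f_{i+1},f_j]+f_j(\nn_{i+1})\,f_i$ when $i+1\in\IP$ and $\mu[f_i,f_j]+\nu[v_i,f_j]=[f_{i+1},f_j]+f_j(\nn_{i+1})\,v_i$ when $i+1\notin\IP$ (only the non-constant coefficient among $\mu,\nu$ survives the derivative). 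When $j>i+1$, Lemma~\ref{bijremark} gives $f_j(\nn_{i+1})=b_{i+1,j}$ while the already-proved (\ref{LBvf1}) gives $[f_{i+1},f_j]=-b_{i+1,j}f_i$, resp.\ $-b_{i+1,j}v_i$, so the two correction terms cancel; when $j=i+1$ both $[f_{i+1},f_{i+1}]$ and $f_{i+1}(\nn_{i+1})$ vanish, and the case $i=0$ reduces to $[f_0,f_j]=[v_0,f_j]=0$ from (\ref{vacuous}).

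The arguments are essentially bookkeeping, and I do not expect a genuine obstacle; the one thing requiring care is keeping track, at each step, of whether the relevant index ($i$, $i+1$, or $j$) lies in $\IP$ and aligning this with the case splits in (\ref{defa})--(\ref{defb}) and in Lemma~\ref{lblemma}. A minor point is that the deduction of (\ref{LBvf2}) must treat $j=i+1$ separately from $j>i+1$, since Lemma~\ref{bijremark} is stated only for a strict inequality, but that boundary case is trivial.
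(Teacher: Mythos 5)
Your proof is correct, but it is organized differently from the paper's. The paper proves (\ref{LBvf1}) and (\ref{LBvf2}) \emph{simultaneously} by induction on the lower index $i$: it first observes that, in view of (\ref{LBvf0}) and the definitions (\ref{defa})--(\ref{defb}), the two displayed formulas are equivalent for each $i\ge 1$; the base case is (\ref{LBvf2}) at $i=0$ (a consequence of (\ref{vacuous})), and in the inductive step it expands the \emph{left} argument $f_i$ via (\ref{ordinarychoice}) or (\ref{invertedchoice}), uses the inductive hypothesis, namely (\ref{LBvf2}) at $i-1$, to cancel two of the three resulting terms, and identifies the survivor via Lemma~\ref{bijremark}. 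You instead prove (\ref{LBvf1}) on its own by induction on the \emph{upper} index $j$, expanding the right argument $f_j$, with base case $j=i+1$ resting on (\ref{LBvf0}) at $i=j$, and then deduce (\ref{LBvf2}) afterwards by expanding $[f_{i+1},f_j]$ and cancelling against $f_j(\nn_{i+1})=b_{i+1,j}$. Both routes use exactly the same ingredients (the chart recursions, the product rule, Lemma~\ref{lblemma}, Lemma~\ref{bijremark}, and the bookkeeping identities for $a_{ij}$, $b_{ij}$); yours has the minor advantage of avoiding the implicit cancellation of $\nn_{i+1}$ needed to pass between (\ref{LBvf2}) and (\ref{LBvf1}) when $i+1\in\IP$, while the paper's joint induction is slightly more compact since a single induction delivers both formulas at once. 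Your handling of the boundary cases --- $j=i+1$, where Lemma~\ref{bijremark} does not apply but both correction terms vanish, and $i=0$, which follows from (\ref{vacuous}) --- is exactly the care the argument requires.
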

\begin{proof}In view of (\ref{LBvf0}),
and using the definitions (\ref{defa}) and (\ref{defb}),
one sees that formulas (\ref{LBvf1}) and (\ref{LBvf2})
are equivalent for each $i\ge 1$.
We prove them by induction on $i$.
To provide a base case for the induction, we remark that 
when $i=0$, formula~(\ref{LBvf2}) is a consequence of (\ref{vacuous}).

Here is the inductive step of the argument:
If $i\in\IP$, then by (\ref{invertedchoice}) we have
\begin{align*}
[f_i,f_j]&=[\nn_i f_{i-1}+v_{i-1},f_j] \\
&= -f_j(\nn_i)f_{i-1}+\nn_i[f_{i-1},f_j]+[v_{i-1},f_j].
\end{align*}
The inductive hypothesis tells us that the sum of the last two terms vanishes.
Thus, by Lemma~\ref{bijremark},
\[
[f_i,f_j] = -b_{ij}f_{i-1}.
\]
If $i\notin\IP$, then by (\ref{ordinarychoice}) we have
\begin{align*}
[f_i,f_j]&=[f_{i-1}+\nn_i v_{i-1},f_j] \\
&= [f_{i-1},f_j] -f_j(\nn_i)v_{i-1}+\nn_i [v_{i-1},f_j].
\end{align*}
The inductive hypothesis tells us that the sum of the first and last terms vanishes.
Thus, by Lemma~\ref{bijremark},
\[
[f_i,f_j] = -b_{ij}v_{i-1}.
\]
\end{proof}

\begin{lemma} \label{omnibus}
On each standard chart of $S(k)$,
there is an ordered basis $g_0, g_1, \dots, g_{k+1}$
of sections of the tangent bundle
with the following properties:
\begin{enumerate}
\item \label{startbasis}
The first two elements are the standard focal and vertical vector
fields:
$g_0=f_k$ and $g_1=v_k$.
\item \label{continuebasis}
For $2 \le i \le k+1$ we have 
\begin{equation*}
g_i =
\begin{cases} 
\pm v_{k-i+1} \quad \text{if $k-i+2 \notin \IP$}\\
\pm f_{k-i+1}  \quad \text{if $k-i+2 \in \IP$},
\end{cases}
\end{equation*}
with the appropriate sign being determined by property~(\ref{gbrackets}).
The vector field $g_i$
generates  
the rank one quotient sheaf $\Delta_i/\Delta_{i-1}$.
\item \label{newdeltaibasis}
The vector fields $g_0, g_1, \dots, g_i$ form a basis of sections of 
$\Delta_i$.
\item \label{monomialeffect}
If $a$ is a monomial in the standard 
coordinates, then  
$g_0(a)$ and $g_1(a)$
are linear combinations of monomials with positive coefficients.
\item \label{gbrackets}
For $1 \le i \le k$ we have
\begin{equation*}
[g_0,g_i]= \left(\displaystyle
\prod_{\substack{h=k-i+3 \\ h \in \IP}}^k
\nn_h
\right)
g_{i+1} \, .
\end{equation*}
\end{enumerate}
\end{lemma}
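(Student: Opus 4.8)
The plan is to construct the basis $g_0,\dots,g_{k+1}$ explicitly, using Lemma~\ref{altbasis} to dictate the choice of $\pm v_{k-i+1}$ versus $\pm f_{k-i+1}$, and then to fix the signs so that property~(\ref{gbrackets}) holds. First I would set $g_0=f_k$ and $g_1=v_k$; properties~(\ref{startbasis}) and (\ref{monomialeffect}) are then immediate from Lemma~\ref{vkfkmonomial}. For $2\le i\le k+1$, the defining formula in property~(\ref{continuebasis}) tells us that $g_i$ is $\pm v_{k-i+1}$ or $\pm f_{k-i+1}$ according to whether $k-i+2\notin\IP$ or $k-i+2\in\IP$; that the unsigned versions of $g_0,\dots,g_i$ form a basis of $\Delta_i$ is exactly the content of formula~(\ref{deltaibasis}) together with Lemma~\ref{altbasis} (one replaces $f_{k-i+1}$ by $f_{k-i+2}$ in case (a), and $v_{k-i+1}$ by $f_{k-i+2}$ in case (b)), so property~(\ref{newdeltaibasis}) follows, and property~(\ref{continuebasis})'s claim about the quotient $\Delta_i/\Delta_{i-1}$ is then automatic. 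The signs do not affect any of these structural statements, so they can be chosen freely later.

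The heart of the argument is property~(\ref{gbrackets}), which I would prove by downward induction on $i$ (equivalently, by induction on the level $\ell=k-i+1$ at which the relevant vector field sits). The key input is the bracket formulas~(\ref{vacuous}), (\ref{LBvf0}), and (\ref{LBvf1}) of the previous lemmas, specialized to the case where the left slot is $f_k$. When $\ell=k-i+1\ge 1$, formula~(\ref{LBvf0}) gives $[v_\ell, f_k] = a_{\ell k}\,v_{\ell-1}$ if $\ell\notin\IP$ and $a_{\ell k}\,f_{\ell-1}$ if $\ell\in\IP$; formula~(\ref{LBvf1}) gives $[f_\ell, f_k]=-b_{\ell k} v_{\ell-1}$ or $-b_{\ell k} f_{\ell-1}$ by the same dichotomy. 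The point is that in each case the bracket $[g_0,g_i]$ is, up to a sign and a monomial factor, exactly the next basis vector $g_{i+1}$ — because whether $g_{i+1}$ is built from $v_{\ell-1}$ or $f_{\ell-1}$ is governed by whether $\ell=k-(i+1)+2\in\IP$, which matches the dichotomy in the bracket formulas. One then reads off that the monomial factor is $a_{\ell k}$ (for $v_\ell$ in the left-hand bracket slot, i.e. when $k-i+2\notin\IP$) or $b_{\ell k}$ (when $k-i+2\in\IP$); unwinding definitions~(\ref{defa}) and (\ref{defb}) shows that, after the sign of $g_{i+1}$ is chosen to absorb the minus sign when needed, this monomial is precisely $\prod_{h=k-i+3,\,h\in\IP}^{k}\nn_h$. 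The base case $i=k$ uses $[v_0,f_k]=0=[f_0,f_k]$ from (\ref{vacuous}), matching the empty product.

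The main obstacle, and the part requiring genuine care rather than routine computation, is the bookkeeping that matches the index set appearing in $a_{\ell k}$ and $b_{\ell k}$ with the telescoping product $\prod_{h=k-i+3,\,h\in\IP}^{k}\nn_h$ across all four cases (the left bracket entry being a $v$ or an $f$, crossed with $k-i+2\in\IP$ or not), while simultaneously verifying that the sign forced by (\ref{LBvf1}) is consistent — i.e. that choosing $g_{i+1}=+f_{\ell-1}$ or $+v_{\ell-1}$ versus $-f_{\ell-1}$ or $-v_{\ell-1}$ at each step never leads to a contradiction with a sign already forced at an earlier step. Since each $g_{i+1}$ appears as the left-hand vector field only in brackets of the form $[g_0,g_{i+1}]$ and in no earlier bracket, its sign is a free parameter at the moment it is introduced, so no conflict can arise; but I would state this explicitly. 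A secondary subtlety is the boundary index $g_{k+1}$ (level $0$): here $k-i+2=1$, and one must check separately whether $1\in\IP$ and confirm the formula still reads correctly, which it does since $[g_0,g_k]$ then equals $\pm a_{1k}v_0$ or $\pm b_{1k}f_0$ according to the chart, matching $g_{k+1}$. Once the index gymnastics are pinned down, everything else is mechanical.
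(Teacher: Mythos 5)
Your proposal is correct and follows essentially the same route as the paper's proof; the only difference is organizational. The paper \emph{defines} $g_{i+1}$ recursively as $\bigl(\prod_{h=k-i+3,\,h\in\IP}^{k}\nn_h\bigr)^{-1}[g_0,g_i]$, so property~(\ref{gbrackets}) holds by fiat and the content of the proof is to show that this recursive definition produces the explicit vector fields of property~(\ref{continuebasis}); the sign ambiguity never arises because the sign is determined by the definition. You invert the logic: you take the explicit formula as the definition (up to sign) and verify property~(\ref{gbrackets}), absorbing signs as you go. Both routes boil down to the same four-case verification against formulas (\ref{LBvf0}) and (\ref{LBvf1}) and the same telescoping identities $a_{k-i+1,k}=a_{k-i+2,k}$ (when $k-i+2\notin\IP$) and $b_{k-i+1,k}=a_{k-i+2,k}$ (when $k-i+2\in\IP$), and both invoke Lemma~\ref{altbasis} and Lemma~\ref{vkfkmonomial} for properties~(\ref{newdeltaibasis}) and (\ref{monomialeffect}).

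Two small imprecisions worth fixing. First, your induction should run \emph{upward} in $i$ (equivalently downward in the level $\ell=k-i+1$), not downward: the sign of $g_{i+1}$ is forced by the already-fixed sign of $g_i$ via $[g_0,g_i]$, so sign choices propagate forward. Second, your ``base case $i=k$ uses $[v_0,f_k]=0=[f_0,f_k]$'' is misplaced: for $i=k$ the bracket $[g_0,g_k]$ involves a level-$1$ vector field and is nonzero (it produces $g_{k+1}$ at level $0$); the identities $[v_0,f_k]=[f_0,f_k]=0$ would only be relevant to $[g_0,g_{k+1}]$, which lies outside the range $1\le i\le k$ of property~(\ref{gbrackets}). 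The genuine base case is $i=1$, where $[g_0,g_1]=[f_k,v_k]=-f_{k-1}$ or $-v_{k-1}$ by Lemma~\ref{lblemma}, matching the empty product. Neither slip affects the substance of the argument, which is sound.
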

%
\begin{proof}
We define $g_0$ and $g_1$ as required by property (\ref{startbasis}),
and define (for $g\ge 1$)
\begin{equation*} 
g_{i+1}=
\left(
\prod_{\substack{h=k-i+3 \\ h \in \IP}}^k
\nn_h
\right)^{\!\!\!-1}
[g_0,g_i]\,.
\end{equation*}
Although it seems that there is a denominator,
we proceed to demonstrate that we obtain
the formulas of
property (\ref{continuebasis}), as follows.

Starting with the formula in property~(\ref{gbrackets})
and making the substitutions of property~(\ref{continuebasis}),
we find that the two formulas of property~(\ref{continuebasis})
are equivalent to the following four displayed formulas;
thus these formulas provide
an inductive proof of property~(\ref{continuebasis}).
Here are the four formulas:
\begin{equation*} 
[f_k,v_{k-i+1}]=\pm \left(\displaystyle
\prod_{\substack{h=k-i+3 \\ h \in \IP}}^k
\nn_h
\right)
v_{k-i} 
\qquad \text{if $k-i+2 \notin \IP$}
\quad \text{and $k-i+1 \notin \IP$}
\end{equation*}
\begin{equation*} 
[f_k,v_{k-i+1}]=\pm \left(\displaystyle
\prod_{\substack{h=k-i+3 \\ h \in \IP}}^k
\nn_h
\right)
f_{k-i} 
\qquad \text{if $k-i+2 \notin \IP$}
\quad \text{and $k-i+1 \in \IP$}
\end{equation*}
\begin{equation*} 
[f_k,f_{k-i+1}]=\pm \left(\displaystyle
\prod_{\substack{h=k-i+3 \\ h \in \IP}}^k
\nn_h
\right)
v_{k-i} 
\qquad \text{if $k-i+2 \in \IP$}
\quad \text{and $k-i+1 \notin \IP$}
\end{equation*}
\begin{equation*} 
[f_k,f_{k-i+1}]=\pm \left(\displaystyle
\prod_{\substack{h=k-i+3 \\ h \in \IP}}^k
\nn_h
\right)
f_{k-i} 
\qquad \text{if $k-i+2 \in \IP$}
\quad \text{and $k-i+1 \in \IP$}
\end{equation*}
The first two formulas are special cases (with indeterminate signs)
of (\ref{LBvf0}); observe that to verify this we use the condition
$k-i+2 \notin \IP$
to infer that $a_{k-i+1,k}=a_{k-i+2,k}$.
The last two formulas are special cases
of (\ref{LBvf1}); to verify this, we use the condition
$k-i+2 \in \IP$, which tells us that
in (\ref{defb}) we should read the top line
to infer that 
$b_{k-i+1,k}=a_{k-i+2,k}$.

To obtain property (\ref{continuebasis}), 
we compare
the basis
\begin{equation*}
f_{k-i+2}, v_{k-i+2}, \dots, v_{k}
\end{equation*}
for $\Delta_{i-1}$ obtained from (\ref{deltaibasis})
with the alternative basis for $\Delta_{i}$
exhibited in Lemma~\ref{altbasis}.
Property (\ref{newdeltaibasis})
is now immediate, and property (\ref{monomialeffect})
is a restatement of Lemma~\ref{vkfkmonomial}.
\end{proof}

\begin{example} \label{specialbasisexample}
On chart $\cC(ooioii)$,
using the results from Example~\ref{standardbracketsexample},
we have
\begin{align*}
g_0 &= f_6 \\
g_1 &= v_6 \\
g_2&= [g_0,g_1] = -f_5 \\
g_3 &= [g_0,g_2] = -f_4 \\
g_4 &= \tfrac{1}{\nn_6} [g_0,g_3] = -v_3 \\
g_5 &= \tfrac{1}{\nn_5\nn_6} [g_0,g_4] = f_2 \\
g_6 &= \tfrac{1}{\nn_5\nn_6} [g_0,g_5] = v_1 \\
g_7 &= \tfrac{1}{\nn_3\nn_5\nn_6} [g_0,g_6] = -v_0. \\
\end{align*}
\end{example}

From these explicit calculations on each standard chart,
we can now draw a global conclusion.
Recall from Section~9 of \cite{MR4887124}
that the monster space $S(k)$
carries \emph{divisors at infinity} $I_j$
for $2\le j \le k$.
Let
$\cI_j$ denote the ideal sheaf of $I_j$.

\begin{corollary}\label{oldstructuretheorem1}
For each $i$ with $3 \le i \le k$ we have
\begin{equation} \label{structureequation1}
[\Delta, \Delta_i] = \Delta_i + \left(\displaystyle\prod_{j=k-i+3}^k \cI_j\right)\Delta_{i+1}.
\end{equation}
\end{corollary}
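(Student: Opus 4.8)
The plan is to establish the identity (\ref{structureequation1}) chart by chart, using the basis $g_0,\dots,g_{k+1}$ from Lemma~\ref{omnibus} together with its bracket formula in property~(\ref{gbrackets}); since both sides of (\ref{structureequation1}) are sheaves on $S(k)$, a verification on every standard chart over every chart of $S$ suffices, provided we check that the standard charts cover $S(k)$ (this is part of the setup recalled in Section~\ref{ccvf}). So fix a standard chart $\cC(\cdots)$ with its set $\IP$ of inverted positions.

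First I would unwind the left-hand side. By property~(\ref{newdeltaibasis}), $\Delta_i$ is generated over the structure sheaf by $g_0,g_1,\dots,g_i$, and $\Delta$ is generated by $g_0,g_1$. So $[\Delta,\Delta_i]$ is generated as a sheaf by $\Delta_i$ itself together with all brackets $[g_0, a g_\ell]$ and $[g_1, a g_\ell]$ for $0\le \ell\le i$ and $a$ a local function. The product rule (\ref{productrule}) reduces each such bracket to an $\cO$-combination of $[g_0,g_\ell]$, $[g_1,g_\ell]$, and the $g_\ell$ themselves; the last contribute only to the $\Delta_i$ summand. For the brackets among basis elements: $[g_0,g_\ell]$ for $\ell\le i$ is, by property~(\ref{gbrackets}), a scalar (a monomial in the $\nn_h$) times $g_{\ell+1}$, which for $\ell<i$ lies in $\Delta_i$ and for $\ell=i$ equals $\bigl(\prod_{h=k-i+3,\,h\in\IP}^k \nn_h\bigr)g_{i+1}$. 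One also needs the brackets $[g_1,g_\ell]=[v_k,g_\ell]$; these I would read off from Lemma~\ref{lblemma} and Lemma~\ref{omnibus}(\ref{continuebasis}): since $v_k$ sits at the top of the tower, $[v_k,f_j]$ and $[v_k,v_j]$ vanish for all the indices $j\le k$ that arise in $g_2,\dots,g_i$ (the relevant $v_j,f_j$ have $j< k$), so these contribute nothing beyond $\Delta_i$. Hence
\[
[\Delta,\Delta_i] = \Delta_i + \cO\cdot\!\!\left(\prod_{\substack{h=k-i+3\\ h\in\IP}}^k \nn_h\right) g_{i+1}.
\]

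Next I would identify the coefficient ideal. Each $\nn_h$ with $h\in\IP$ that appears in the product is, by the description of divisors at infinity recalled in Section~9 of \cite{MR4887124}, precisely (up to a unit, on this chart) a local equation for $I_h$; and on this chart the factors $\nn_h$ with $k-i+3\le h\le k$, $h\notin\IP$ are units, so that the monomial $\prod_{h=k-i+3,\,h\in\IP}^k \nn_h$ generates the ideal $\prod_{j=k-i+3}^k \cI_j$ locally. (Here I use that on a chart where the choice at level $j$ is ordinary, $I_j$ does not meet the chart, so $\cI_j$ is the unit ideal there.) Combining, $[\Delta,\Delta_i]=\Delta_i+\bigl(\prod_{j=k-i+3}^k\cI_j\bigr)g_{i+1}\cdot\cO$, and since $\Delta_{i+1}=\Delta_i+\cO\cdot g_{i+1}$ by property~(\ref{newdeltaibasis}), this is exactly the right-hand side of (\ref{structureequation1}).

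The main obstacle I anticipate is the careful bookkeeping of which of the auxiliary brackets $[g_1,g_\ell]$ (and, after applying the product rule, the derivative terms $g_0(a)g_\ell$ and $g_1(a)g_\ell$) can fail to lie in $\Delta_i$ — one must be sure that nothing escapes past $\Delta_{i+1}$ or, worse, lands in $g_{i+1}$ with the wrong (unit, or smaller) coefficient ideal. The key facts that make this work are the vanishing $[v_k,f_j]=[v_k,v_j]=0$ for $j<k$ from Lemma~\ref{lblemma} and (\ref{vacuous}), the uniform form of property~(\ref{gbrackets}), and the remark (from Lemma~\ref{vkfkmonomial}/property~(\ref{monomialeffect})) that $g_0,g_1$ applied to a coordinate monomial yield again combinations of monomials, so no negative powers of the $\nn_h$ appear and the coefficient stays inside the stated ideal. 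The precise matching of the monomial $\prod_{h\in\IP}\nn_h$ with the product of ideal sheaves $\prod\cI_j$ on each chart — including the degenerate chart-by-chart behaviour of $\cI_j$ — is the one place where I would be most careful to get the index range $k-i+3\le \cdot\le k$ exactly right.
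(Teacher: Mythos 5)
Your proposal is correct and follows essentially the same route as the paper: work chart by chart, use the basis $g_0,\dots,g_{k+1}$ of Lemma~\ref{omnibus} with property~(\ref{gbrackets}), note that brackets with $g_1=v_k$ contribute nothing beyond $\Delta_i$, and identify the ideal $\prod_{j=k-i+3}^k\cI_j$ on the chart as the principal ideal generated by $\prod_{h\in\IP,\,k-i+3\le h\le k}\nn_h$. The only difference is one of bookkeeping: the paper observes $[\Delta,\Delta_{i-1}]\subseteq[\Delta_{i-1},\Delta_{i-1}]=\Delta_i$, so that only the brackets of $g_0,g_1$ with the single quotient generator $g_i$ need to be computed, whereas you bracket against all of $g_0,\dots,g_i$ directly (and in doing so you should note that $[g_1,g_0]=[v_k,f_k]$ is not zero but equals $\pm g_2\in\Delta_2\subseteq\Delta_i$, which does not affect your conclusion).
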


\begin{proof}
Consider a chart specified by a word in the symbols
$o$ and $i$.
In this chart the divisor at infinity $I_j$
is the hyperplane $\nn_j=0$ if $j\in\IP$,
but 
if $j\notin\IP$ then $I_j$
does not meet the chart.
Therefore in this chart the ideal appearing in (\ref{structureequation1})
is
\begin{equation} \label{principal}
\prod_{\substack{j \in \IP \\ k-i+3 \le j \le k}}
\cI_j
\end{equation}
and it is a principal ideal generated by 
\[
\displaystyle\prod_{\substack{j \in \IP \\ 
k-i+3 \le j\le k}}\nn_j\,.
\]
We know that $[\Delta,\Delta_{i-1}]$ is a subsheaf of $\Delta_i=[\Delta_{i-1},\Delta_{i-1}]$,
and thus to understand $[\Delta,\Delta_i]$ it suffices to know the Lie brackets
of the two generators $g_0$, $g_1$
of $\Delta$
with the single generator $g_i$ of the quotient sheaf $\Delta_i/\Delta_{i-1}$.
By definition $g_1=v_k$, and (\ref{vifjvanish}) tells us that $[g_1,g_i]=0$.
Thus by property~(\ref{gbrackets}) of Lemma~\ref{omnibus} we obtain 
(\ref{structureequation1}).
\end{proof}

\section{Focal order and vertical order} \label{fovo}

The notion of focal order which we lay out here is a special case of the notion
of order of a function with respect to a nonholonomic system, as explicated in
Section 2.1.1 of \cite{MR3308372} and in Section 4 of \cite{MR1421822}.
Our starting point and application are quite different, however;
we do not employ a metric, nor do we make use of privileged coordinates.
Thus we offer here a self-contained treatment, confined to the case
of the Goursat distribution $\Delta$ on a space in the monster tower.

Given a point $p\in S(k)$, a smooth function $a$ defined in some neighborhood, and a parameterized
smooth focal curve germ $C$ through $p$,
let $\#(a\cdot C)$ be the intersection number.
Concretely, if $\gamma\colon\Gamma\to C$
is the parametrization and
$t$ is the parameter (so that $\gamma(0)=p$), then
$\#(a\cdot C)$ is the order of vanishing of
$a \circ \gamma$ as function of $t$.
If the focal curve lies within the hypersurface $a=0$,
then the intersection number is $+\infty$.

We define \emph{the focal order} of $a$ to be
\[
o(a) = \min\{\#(a\cdot C)\},
\]
the minimum over all possible focal curve germs through $p$.
The focal order is a valuation, but here
the value $+\infty$ is impossible, 
since there are no integral hypersurfaces for
the focal distribution.
Note that in a family of curves, the intersection number
can only increase or stay the same under specialization;
in this sense the order is the generic value of the intersection number.

\begin{example}
Working at the point $p=(0,0;0,0,0,1) \in \cC(ooio)$ on $S(4)$,
and using the coordinates $x$, $y$, $y'=dy/dx$, $y''=dy'/dx$, $x'=dx/dy''$,
$x''=dx'/dy''$,
consider the function $a=y-\frac{1}{15}(y'')^{15}$.
We write
\begin{align*}
y''&=A_1t + A_2 t^2 + \cdots \\
x''&=1+B_1t+B_2t^2+\cdots
\end{align*}
where the ellipses indicate higher-order terms. 
By repeated integration we find that
\begin{align*}
x'&=\int x'' dy''
=A_1 t + \left(A_2+\tfrac{1}{2}A_1B_1\right)t^2+\cdots \\
x&=\int x' dy''
=\tfrac{1}{2}{A_1}^2\, t^2 + \left(A_1A_2+\tfrac{1}{6}{A_1}^2\,B_1\right)t^3 +\cdots \\
y'&=\int y'' dx
=\tfrac{1}{3}{A_1}^3\,t^3 + \left({A_1}^2\, A_2+\tfrac{1}{8}{A_1}^3\,B_1\right)t^4 +\cdots \\
y&=\int y' dx
=\tfrac{1}{15}{A_1}^5\, t^5 + \left(\tfrac{1}{3}{A_1}^4\, A_2+\tfrac{7}{144}{A_1}^5\, B_1\right)t^6 +\cdots \\
y-\tfrac{1}{15}(y'')^5 &=\tfrac{7}{144} {A_1}^5\, B_1 t^6 + \cdots
\end{align*}
so that the intersection number $\#(a\cdot C)$ is 6 whenever $A_1$ and $B_1$
don't vanish, and larger otherwise. Thus
$o(a)=6$.
(For those familiar with the notion of nonholonomic privileged coordinates,
this is the last coordinate in a system of such coordinates;
as already indicated, however, we make no use of this concept.)
\end{example}

The notion of order is easily extended to exact 1-forms:
we define $o(da)=o(a-a(p))$. 
Conversely, if we know $o(da)$ then we can compute $o(a)$:
if $a$ vanishes at $p$
we have $o(a)=o(da)$, and otherwise $o(a)=0$.
Using this extension, we can easily compute the focal orders
of the standard coordinates in a standard chart on a monster space.

\begin{example} \label{foexample}
We use the same chart $\cC(ooioii)$ as in Examples~\ref{focalvfexample},
\ref{standardbracketsexample}, and \ref{specialbasisexample}.
Working at the point 
$(x,y;y',y'',x',x'',y^{(3)},x^{(3)})=(0,0;0,0,0,1,0,0)$,
begin with the fact that the focal order of each active coordinate is 1,
and then compute as follows: 
\begin{align*}
o(dy^{(3)})&= 1   &o(y^{(3)})&= 1 \\
o(dx^{(3)})&= 1   &o(x^{(3)})&= 1 \\
o(dx'')&= o(x^{(3)}) + o(dy^{(3)}) = 2          &o(x'')&= 0 \\
o(dy'')&= o(y^{(3)}) + o(dx'') = 3          &o(y'')&= 3 \\
o(dx')&= o(x'') + o(dy'') = 3          &o(x')&= 3 \\
o(dx)&= o(x') + o(dy'') = 6          &o(x)&= 6 \\
o(dy')&= o(y'') + o(dx) = 9          &o(y')&= 9 \\
o(dy)&= o(y') + o(dx) = 15          &o(y)&= 15 
\end{align*}
The RVT code word of this point is $RRV\!RVV$.
\end{example}
\begin{example} \label{RRVTVVpoint}
If instead we work at the origin of this chart, whose code word is $RRVTVV$,
we find that
\begin{align*}
o(dy^{(3)})&=o(y^{(3)})=1    \\
o(dx^{(3)})&= o(x^{(3)})=1    \\
o(dx'')&= o(x'') = 2\\
o(dy'')&= o(y'') = 3 \\
o(dx')&= o(x')  = 5 \\
o(dx)&= o(x)  = 8 \\
o(dy')&= o(y')  =11 \\
o(dy)&= o(y) = 19 
\end{align*}
\end{example}

The general procedure is as follows.
\begin{enumerate}
\item The order of the differential of each active coordinate is 1.
\item Recursively, using the definitions of the coordinates in backwards order:
\begin{enumerate}
\item use the definition of the coordinate to infer the order of vanishing of the differential of a prior coordinate;
\item if the prior coordinate vanishes at $p$, then its order agrees with the order
of its differential, and otherwise is 0.
\end{enumerate}
\end{enumerate}

Given a parameterized focal curve germ $C$ through $p$, locally there is a focal vector
field $v_C$ whose integral curve through $p$ is $C$;
it is not unique, but its restriction to $C$ is unique.
For a smooth function germ $a$ in a neighborhood of $p$, we consider the Lie derivative $v_C(a)$.
\begin{lemma}
Suppose that $\#(a\cdot C)>0$. Then
\[
\#(v_C(a)\cdot C) = \#(a\cdot C)-1.
\]
\end{lemma}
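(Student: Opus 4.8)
The plan is to work in a local parameter on the curve germ $C$ and reduce the statement to the elementary fact that if a one-variable function vanishes to order $n\ge 1$ then its derivative vanishes to order $n-1$. Let $\gamma\colon\Gamma\to C$ be the parametrization with parameter $t$ and $\gamma(0)=p$. By definition $\#(a\cdot C)=\ord_t(a\circ\gamma)$ and $\#(v_C(a)\cdot C)=\ord_t\bigl((v_C(a))\circ\gamma\bigr)$. So the whole content is the identity
\[
(v_C(a))\circ\gamma = \frac{d}{dt}\,(a\circ\gamma),
\]
after which the hypothesis $\#(a\cdot C)>0$ (so $a\circ\gamma$ vanishes at $t=0$ to some order $n\ge 1$, and in particular is not a nonzero constant) gives
\[
\ord_t\frac{d}{dt}(a\circ\gamma) = n-1 = \#(a\cdot C)-1.
\]

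First I would recall why the displayed identity holds. The vector field $v_C$ is, by its very construction, the one whose integral curve through $p$ is the parametrized germ $C$; that is, $\gamma$ satisfies $\dot\gamma(t) = v_C(\gamma(t))$. Hence for any smooth function germ $a$, the chain rule gives $\frac{d}{dt}(a\circ\gamma)(t) = da_{\gamma(t)}(\dot\gamma(t)) = da_{\gamma(t)}(v_C(\gamma(t))) = (v_C(a))(\gamma(t)) = (v_C(a)\circ\gamma)(t)$. This is exactly the identity above. Note that although $v_C$ is not unique, its restriction to $C$ is (as recalled in the paragraph preceding the lemma), so $v_C(a)\circ\gamma$ is well defined and the statement is unambiguous; one should remark that only the values of $v_C$ along $C$ enter the chain-rule computation.

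Then I would finish by the one-variable argument: write $a\circ\gamma = c_n t^n + c_{n+1}t^{n+1}+\cdots$ with $n=\#(a\cdot C)\ge 1$ and $c_n\neq 0$ (if $n=+\infty$, i.e. $a$ vanishes identically on $C$, then so does $v_C(a)$ and both sides are $+\infty$, but the hypothesis $\#(a\cdot C)>0$ together with the intended conclusion makes the finite case the one of interest, and the infinite case is immediate). Differentiating term by term gives $\frac{d}{dt}(a\circ\gamma) = n c_n t^{n-1}+\cdots$ with leading coefficient $n c_n\neq 0$ (here characteristic $0$ is used, which is in force in all three settings), so its order of vanishing is exactly $n-1$.

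There is essentially no obstacle: the only point requiring a word of care is the well-definedness issue, namely that the conclusion does not depend on the choice of extension $v_C$ of the focal direction field off $C$, which is handled by observing that the chain-rule computation only ever evaluates $v_C$ at points of $C$. Everything else is the standard relationship between a vector field, its integral curve, and the derivative along that curve, plus the trivial valuation computation for functions of one variable.
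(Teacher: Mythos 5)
Your proof is correct and is essentially the paper's own argument: both rest on the identity $\tfrac{d}{dt}(a\circ\gamma)=v_C(a)\circ\gamma$ (a chain-rule consequence of $\gamma$ being an integral curve of $v_C$) together with the one-variable fact that differentiation drops the order of vanishing by exactly one. The extra remarks you make on well-definedness of $v_C$ along $C$ and the role of characteristic zero are correct but are just expansions of what the paper states tersely.
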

\begin{proof}
This follows from the basic fact that 
for a curve germ parameterized by $\gamma\colon\Gamma\to C$,
we have
\[\tfrac{d}{dt}(a \circ \gamma)=v_C(a)\circ \gamma,
\]
and that differentiation reduces the order of vanishing by 1.
\end{proof}
\begin{corollary}
If $o(a)>0$, then $o(v_C(a))=o(a)-1$.
\end{corollary}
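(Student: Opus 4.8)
The plan is to reduce the statement about the focal order $o$ — which is a minimum of intersection numbers over all focal curve germs through $p$ — to the curve-by-curve statement already proved in the preceding Lemma. The key observation is that the vector field $v_C$ depends on the chosen focal curve germ $C$, but once $C$ is fixed, the Lemma tells us exactly how the intersection number with $C$ drops. So the argument is a two-sided inequality: one direction uses a cleverly chosen curve, the other uses an arbitrary curve.

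First I would establish $o(v_C(a)) \ge o(a) - 1$ for any fixed focal curve germ $C$ through $p$. Let $C'$ be an arbitrary focal curve germ through $p$, with associated focal vector field $v_{C'}$; we want a lower bound on $\#(v_C(a) \cdot C')$. Here a subtlety arises: the Lemma as stated computes $\#(v_{C'}(a)\cdot C')$, using the vector field $v_{C'}$ adapted to $C'$ itself, not the vector field $v_C$. The cleanest route is therefore to run the argument differently: to show $o(v_C(a)) \le o(a) - 1$ by exhibiting one good curve, and to show $o(v_C(a)) \ge o(a) - 1$ by a direct argument valid on every curve.

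For the upper bound $o(v_C(a)) \le o(a) - 1$: choose the focal curve germ $C$ to be one that realizes the minimum in $o(a)$, i.e. $\#(a\cdot C) = o(a)$. Since $o(a) > 0$ by hypothesis, the preceding Lemma applies and gives $\#(v_C(a)\cdot C) = o(a) - 1$. As $o(v_C(a))$ is the minimum of $\#(v_C(a)\cdot C')$ over all focal curve germs $C'$, taking $C' = C$ yields $o(v_C(a)) \le o(a) - 1$. For the lower bound $o(v_C(a)) \ge o(a) - 1$: let $C'$ be any focal curve germ through $p$ and $\gamma\colon\Gamma\to C'$ its parametrization. We compute $\#(v_C(a)\cdot C')$ as the order of vanishing of $v_C(a)\circ\gamma$ in the parameter $t$. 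Since $o(a) = \min\#(a\cdot C')$, we have $\#(a\cdot C') \ge o(a)$, so $a\circ\gamma$ vanishes to order at least $o(a)$. Now $v_C(a)$ is a derivative of $a$; more precisely, in any coordinate system $v_C = \sum_i c_i \,\partial/\partial x_i$ with the $c_i$ smooth, so $v_C(a) = \sum_i c_i\,(\partial a/\partial x_i)$, and each $\partial a/\partial x_i$ composed with $\gamma$ vanishes to order at least $o(a) - 1$ (a monomial-by-monomial check: differentiating a function vanishing to order $\ge o(a)$ in the ambient coordinates, then restricting, cannot produce something vanishing to order less than $o(a)-1$). Hence $v_C(a)\circ\gamma$ vanishes to order at least $o(a) - 1$, giving $\#(v_C(a)\cdot C') \ge o(a) - 1$ for every $C'$, and therefore $o(v_C(a)) \ge o(a) - 1$. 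Combining the two inequalities gives $o(v_C(a)) = o(a) - 1$.

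The main obstacle is the lower bound, specifically the claim that differentiating $a$ by the smooth vector field $v_C$ and then restricting to an arbitrary focal curve $C'$ drops the order of vanishing by at most one. One must be slightly careful that this is a statement about vanishing order in the \emph{curve parameter} $t$, not about the focal order itself, so one cannot simply quote that $o$ is a valuation; instead one argues directly with the chain rule and the fact that $a\circ\gamma$ and its $t$-derivative differ in order of vanishing by exactly one whenever the former is positive — but here we only know $a\circ\gamma$ vanishes to order $\ge o(a)$, which could exceed $o(a)$ for the particular curve $C'$, and $v_C(a)\circ\gamma$ need not equal $\tfrac{d}{dt}(a\circ\gamma)$ unless $C' = C$. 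The resolution is exactly the ambient-coordinate computation sketched above: write $v_C$ in the standard coordinates, note its coefficients are smooth (indeed, by Lemma~\ref{vkfkmonomial}, when $v_C$ is built from $f_k$ and $v_k$ the coefficients are polynomial), and observe that each partial derivative of $a$ loses at most one order of vanishing along $\gamma$. This is the only point requiring genuine (if routine) verification; everything else is a formal consequence of the preceding Lemma and the definition of $o$ as a minimum.
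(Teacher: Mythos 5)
Your upper-bound half is fine: choosing $C$ with $\#(a\cdot C)=o(a)$ and applying the preceding Lemma gives $o(v_C(a))\le o(a)-1$, and this, together with reading $o$ as the generic intersection number, is essentially all the paper has in mind (it offers no separate argument for the Corollary). The genuine gap is in your lower bound, specifically the claim that each $\partial a/\partial x_i$ composed with $\gamma$ vanishes to order at least $o(a)-1$ because $a$ ``vanishes to order $\ge o(a)$ in the ambient coordinates.'' The focal order is not the ordinary vanishing order at $p$; it is typically much larger. In Example~\ref{foexample}, at the indicated point of chart $\cC(ooioii)$ one has $o(y)=15$ even though $dy\neq 0$ there; taking $a=y$ and $x_i=y$, the partial derivative $\partial a/\partial y\equiv 1$ has order $0$ along every focal curve, not $\ge 14$. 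So the term-by-term estimate is false, and the ``monomial-by-monomial check'' rests on a false premise. Worse, your argument uses only smoothness (or polynomiality) of the coefficients $c_i$, so it would apply verbatim to the non-focal field $\partial/\partial y$ and show that it lowers focal order by at most one --- yet $\partial/\partial y$ takes $y$, of focal order $15$, to the constant $1$, of order $0$. No argument at that level of generality can close the gap.

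What makes the inequality $o(v_C(a))\ge o(a)-1$ true is precisely that $v_C$ is a section of $\Delta$: it is annihilated by the forms $d\dd_i-\nn_i\,d\rr_i$, so its $\partial/\partial\dd_i$-coefficient is $\nn_i$ times its $\partial/\partial\rr_i$-coefficient, and these extra factors compensate the drop suffered by $\partial a/\partial\dd_i$ (this is the same mechanism as the paper's recursive computation $o(d\dd)=o(\nn)+o(d\rr)$ of coordinate orders). A correct proof must exploit this structure, for instance by writing $v_C$ in the basis $f_k,v_k$ and inducting through the recursive definition of the coordinates, or by first establishing the ``least number of focal derivatives'' characterization. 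Note also that for an arbitrary $C$ the stated equality can genuinely fail (on $S(1)$ take $a=y'$ and $C$ the integral curve of $f_1=\partial_x+y'\partial_y$ through the origin, so that $v_C=f_1$ and $v_C(a)\equiv 0$), so proving equality only for a minimizing $C$, as you do, is the right reading; but as written your proposal establishes only the inequality $o(v_C(a))\le o(a)-1$.
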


This leads to an alternative characterization of the focal order.
For each focal vector field $w$, we can differentiate $a$ repeatedly
with respect to this vector field until we obtain a function
with nonzero value at $p$; call this value the \emph{order of $a$ with respect to $w$}.
(If we never reach such a function, then we say the order is $+\infty$.)
Then $o(a)$ is the minimal order of $a$, taking the minimum over all focal vector fields.
One can even vary the choice of vector field at each step: $o(a)$ is also the least
number of focal derivatives required to take $a$ to a function with nonzero value.
This is the definition that one finds in \cite{MR1421822} and
\cite{MR3308372}.

For $2\le j \le k$, consider the divisor at infinity $I_j$.
Given a point 
$p\in S(k)$, we define its \emph{vertical order} $\VO_j$
to be the focal order of the function defining $I_j$,
i.e., the minimum intersection number of a regular
focal curve germ with this divisor;
this is independent of the choice of chart.
The  \emph{vertical orders vector} is
\[\
\VO(p) = (\VO_{2},\VO_{3},\dots,\VO_k).
\]
If $p\notin I_j$, then $\VO_j=0$;
in general we can compute it by computing the
focal order of the standard coordinate function defining $I_j$.

\begin{example}
Referring to Example~\ref{foexample}, we observe
that $(0,0;0,0,0,1,0,0)$ does not meet $I_2$ or $I_4$.
Furthermore the divisors $I_3$, $I_5$, and $I_6$ are defined by
the vanishing, respectively, of $x'$, $y^{(3)}$, and $x^{(3)}$.
Thus the vertical orders vector at $p$
is
\[
(0,3,0,1,1).
\]
\end{example}

To relate this notion to our earlier paper \cite{MR4887124},
observe that there we associated a vertical orders vector
to a curve germ $C$ on the base surface $S$ (and more
generally to a focal curve germ at some other level).
Starting with a curve $C$ that lifts to a regular curve germ
through $p\in S(k)$, we see that the two notions agree.
For a more extensive example, we refer to Example~31 of
\cite{MR4887124}, which shows that the vertical orders
vector associated to a point with RVT code word $RVVV\!RVT$
is $(6,3,3,0,2,0)$; since the word is not a Goursat word,
the first entry is nonzero.

\section{Sections of the small growth sheaves} \label{secondstructure}


Suppose that $k\ge3$.
Here we consider the small growth sheaves $\Delta^h$ on $S(k)$.
In Theorem~\ref{structuretheorem2} we will give a description of their sections;
it involves bounds on certain focal orders.
In the subsequent section, we will show that these bounds
are sharp, and this will allow us to give new characterizations
of the small growth invariants.

Choosing a point $p\in S(k)$ and using the entries in its vertical orders vector,
for $h\ge 2$ and
 $2 \le i\le \min\{h,k+1\}$ we define
\begin{equation} \label{ehidef}
e_{hi} =
\begin{cases}
 -(h-i)
+
\displaystyle\sum_{j=k-i+4}^k (i+j-k-3) \VO_j 
& \text{if this quantity is nonnegative,}
\\
\qquad 0 & \text{otherwise}.
\end{cases}
\end{equation}
The sum in (\ref{ehidef}) is zero if $i=2\text{ or } 3$;
hence $e_{h2}=e_{h3}=0$ in all cases.
Note that all the values $e_{hi}$ depend only on the Goursat code word of the specified point;
thus they are Goursat invariants.
It is helpful to arrange these invariants in a table, as in the following example.

\begin{example} \label{ehiRRVTVV}
For a point with Goursat word $RRVTVV$,
such as the point in Example~\ref{RRVTVVpoint},
here is a
table of values for $e_{hi}$.
To the right, we present the table of values
for a point whose Goursat word is $RRRVV$.
In Example~\ref{redrows} we explain why certain
rows are colored in red;
in Section~\ref{pathway}
we will explain
how we know the values of the small growth vector
listed in the rightmost column.
The juxtaposition of the two tables 
hints at a recursion for computing them;
we will explain this in Section~\ref{recursions},
where we introduce the notion of \emph{lifted Goursat word}.

\begin{center}
\begin{tabular}{|c|cccccc|c|}
\hline
\backslashbox{$h$}{$i$}&2&3&4&5&6&7&\begin{color}{blue}$\SG_h$\end{color}\\
\hline
\begin{color}{red}2\end{color} & \begin{color}{red}0\end{color} &  & & & & &\begin{color}{blue}3\end{color}\\
\begin{color}{red}3\end{color} & \begin{color}{red}0\end{color} & \begin{color}{red}0\end{color} & & & & &\begin{color}{blue}4\end{color}\\
4 &0& 0 & 1& & & &\begin{color}{blue}4\end{color}\\
\begin{color}{red}5\end{color} & \begin{color}{red}0\end{color} & \begin{color}{red}0\end{color} & \begin{color}{red}0\end{color}& \begin{color}{red}3\end{color}& & &\begin{color}{blue}5\end{color}\\
6 &0& 0 & 0& 2& 5& &\begin{color}{blue}5\end{color}\\
7 &0& 0 & 0&1 & 4& 12 &\begin{color}{blue}5\end{color}\\
\begin{color}{red}8\end{color} & \begin{color}{red}0\end{color} & \begin{color}{red}0\end{color} &\begin{color}{red}0\end{color}& \begin{color}{red}0\end{color}& \begin{color}{red}3\end{color}& \begin{color}{red}11\end{color} &\begin{color}{blue}6\end{color}\\
9 &0& 0 & 0& 0& 2&10 &\begin{color}{blue}6\end{color} \\
10 &0& 0 &0 &0 &1 &9 &\begin{color}{blue}6\end{color}\\
\begin{color}{red}11\end{color} & \begin{color}{red}0\end{color} & \begin{color}{red}0\end{color} & \begin{color}{red}0\end{color}& \begin{color}{red}0\end{color}& \begin{color}{red}0\end{color}&\begin{color}{red}8\end{color}&\begin{color}{blue}7\end{color}\\
12 &0& 0 & 0& 0& 0&7&\begin{color}{blue}7\end{color}\\
13 &0& 0 & 0& 0& 0&6&\begin{color}{blue}7\end{color}\\
14 &0& 0 & 0& 0& 0&5&\begin{color}{blue}7\end{color}\\
15 &0& 0 & 0& 0& 0&4&\begin{color}{blue}7\end{color}\\
16 &0& 0 & 0& 0& 0&3&\begin{color}{blue}7\end{color}\\
17 &0& 0 & 0& 0& 0&2&\begin{color}{blue}7\end{color}\\
18 &0& 0 & 0& 0& 0&1&\begin{color}{blue}7\end{color}\\
\begin{color}{red}19\end{color} & \begin{color}{red}0\end{color} & \begin{color}{red}0\end{color} & \begin{color}{red}0\end{color}& \begin{color}{red}0\end{color}& \begin{color}{red}0\end{color}&\begin{color}{red}0\end{color}&\begin{color}{blue}8\end{color}\\
\hline
\end{tabular}
\qquad \qquad \begin{tabular}{|c|ccccc|c|}
\hline
\backslashbox{$h$}{$i$}
&2&3&4&5&6&\begin{color}{blue}$\SG_h$\end{color}\\
\hline
\begin{color}{red}2\end{color} & \begin{color}{red}0\end{color} && & &  &\begin{color}{blue}3\end{color}\\
\begin{color}{red}3\end{color} & \begin{color}{red}0\end{color} &\begin{color}{red}0\end{color}& & &  &\begin{color}{blue}4\end{color}\\
4 &0& 0 & 1& & & \begin{color}{blue}4\end{color}\\
\begin{color}{red}5\end{color} &\begin{color}{red}0\end{color}& \begin{color}{red}0\end{color} & \begin{color}{red}0\end{color}& \begin{color}{red}3\end{color}&  &\begin{color}{blue}5\end{color}\\
6 &0& 0 & 0& 2& 5& \begin{color}{blue}5\end{color}\\
7 &0& 0 & 0&1 & 4& \begin{color}{blue}5\end{color}\\
\begin{color}{red}8\end{color} &\begin{color}{red}0\end{color}& \begin{color}{red}0\end{color} &\begin{color}{red}0\end{color}& \begin{color}{red}0\end{color}& \begin{color}{red}3\end{color}& \begin{color}{blue}6\end{color}\\
9 &0& 0 & 0& 0& 2 &\begin{color}{blue}6\end{color} \\
10 &0& 0 &0 &0 &1  &\begin{color}{blue}6\end{color}\\
\begin{color}{red}11\end{color} &\begin{color}{red}0\end{color}& \begin{color}{red}0\end{color} & \begin{color}{red}0\end{color}& \begin{color}{red}0\end{color}& \begin{color}{red}0\end{color} &\begin{color}{blue}7\end{color}\\
\hline
\end{tabular}
 \end{center}

\end{example}

Here are some
elementary properties:
\begin{equation} \label{obv}
\max\{e_{hi}-1,0 \} = e_{h+1,i}\,.
\end{equation}
\begin{equation} \label{alsoobv}
e_{hi} \ge e_{h+1,i}
\end{equation}
\begin{equation} \label{obvobv}
e_{h,i+1} \ge e_{hi}
\end{equation}
\begin{equation} \label{eprop}
e_{hi}+\sum_{j=k-i+3}^k \VO_j \ge e_{h+1,i+1}, \quad 
\text{with equality when $h=i$}\,.
\end{equation}

As remarked in Section~\ref{smallgrowthsection},
the subsheaves $\Delta_h$ and $\Delta^h$ 
of the tangent sheaf $\Theta_{S(k)}$
are equal for $h=1,2,3$.
To describe the other sheaves
in the small growth sequence,
we employ the ordered basis
$g_0, g_1, \dots, g_{k+1}$
developed in Lemma~\ref{omnibus}.

\begin{theorem} \label{structuretheorem2}
Consider a standard chart of $S(k)$ containing the point $p$.
For each $h\ge 3$,
each section of 
the sheaf $\Delta^h$ in this chart
may be written in the form
\begin{equation} \label{structure2}
w + \sum_{i=4}^{\min\{h,k+1\}} c_{hi} g_i,
\end{equation}
where $w$ is a section of $\Delta_3$, and
each $c_{hi}$ is a function whose focal order $o(c_{hi})$ at $p$ is at least $e_{hi}$.
\end{theorem}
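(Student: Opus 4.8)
The plan is to induct on $h$, with the cases $h=3$ (trivial, since $\Delta^3=\Delta_3$ and the sum in \eqref{structure2} is empty) and, more substantively, $h=4$ serving as base cases, and then to push the inductive step using the recursive definition $\Delta^{h}=[\Delta,\Delta^{h-1}]$ together with the bracket formulas collected in Lemma~\ref{omnibus}. The key point is that Lemma~\ref{omnibus}\eqref{gbrackets} gives $[g_0,g_i]=\bigl(\prod_{h=k-i+3,\,h\in\IP}^k\nn_h\bigr)g_{i+1}$, so a coefficient $\nn_j$ with $j\in\IP$ is picked up precisely when we form a bracket that advances the index $i$; these $\nn_j$ are exactly the standard coordinate functions cutting out the divisors $I_j$, whose focal orders are the entries $\VO_j$. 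Thus each application of $\ad_{g_0}$ contributes $\sum_{j} \VO_j$ (over the relevant range of $j\in\IP$) to the focal order of a coefficient while advancing $i$ by one, whereas each application of $\ad_{g_1}=\ad_{v_k}$ does nothing new since $[g_1,g_i]=0$ by \eqref{vifjvanish}. The bookkeeping identity that makes the induction close is \eqref{eprop}: $e_{hi}+\sum_{j=k-i+3}^k\VO_j\ge e_{h+1,i+1}$, together with \eqref{obv} $\max\{e_{hi}-1,0\}=e_{h+1,i}$ for the coefficients that stay in place.

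In more detail: given a section of $\Delta^{h-1}$ written as $w+\sum_{i=4}^{\min\{h-1,k+1\}}c_{h-1,i}g_i$ with $o(c_{h-1,i})\ge e_{h-1,i}$, a typical generator of $\Delta^h$ is a bracket $[z,\,w+\sum c_{h-1,i}g_i]$ with $z$ a section of $\Delta=\Delta^1$, hence a combination of $g_0$ and $g_1$ with function coefficients. Using the product rule \eqref{productrule}, $[z,c_{h-1,i}g_i]$ splits into $c_{h-1,i}[z,g_i]$ and $z(c_{h-1,i})g_i$. The second piece keeps the index $i$ but differentiates the coefficient; since $o(z(c))\ge o(c)-1$ for a focal derivative (the Corollary after the Lemma in Section~\ref{fovo}), and $g_0,g_1$ are focal vector fields, the new coefficient of $g_i$ has focal order at least $e_{h-1,i}-1$, which is $\ge e_{h,i}$ by \eqref{obv}. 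The first piece, when $z$ involves $g_0$, rewrites via \eqref{gbrackets} as $c_{h-1,i}\cdot(\text{monomial in the }\nn_j,\ j\in\IP)\cdot g_{i+1}$; the monomial has focal order $\sum_{j=k-i+1,\,j\in\IP}^k\VO_j$ (reading off from \eqref{gbrackets}, with the index shift coming from $g_i$ versus $g_{i+1}$), so the new coefficient of $g_{i+1}$ has focal order at least $e_{h-1,i}+\sum_{j=k-i+1}^k\VO_j\ge e_{h,i+1}$ by \eqref{eprop} (after matching the summation ranges; note $\VO_j=0$ for $j\notin I_j$, so extending or contracting the range over indices not in $\IP$ is harmless). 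Brackets $[z,w]$ with $w\in\Delta_3$ land in $[\Delta,\Delta_3]=\Delta^4$, which by the $h=4$ base case already has the required form; and $[g_0,g_i]$ contributions that would produce $g_{i+1}$ with $i+1>\min\{h,k+1\}$ do not arise because $i\le h-1$ forces $i+1\le h$, while if $i+1>k+1$ then $g_{i+1}$ is not defined and in fact $[g_0,g_i]$ already lies in $\Delta_{k+1}=TS(k)$, i.e.\ can be absorbed — one checks the index $i=k+1$ case separately, observing $\Delta_{k+1}=\Delta_k$ there. Closure of $\Delta^h$ under addition and multiplication by functions is clear since each generator has the stated form and the focal-order bound $o(c_{hi})\ge e_{hi}$ is preserved under addition and under multiplication by arbitrary functions (which only raises focal order).

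The main obstacle I expect is the index bookkeeping at the two ends of the range $4\le i\le\min\{h,k+1\}$: first, verifying the base case $h=4$, i.e.\ that $[\Delta,\Delta_3]=\Delta^4$ has the form $w+c_{44}g_4$ with $o(c_{44})\ge e_{44}$, which requires unwinding $e_{44}=\max\{-0+\sum_{j=k}^k(4+k-k-3)\VO_j,0\}=\VO_k$ and matching it against the single $\nn_k$-type factor produced by $[g_0,g_3]$ when $k\in\IP$ (and checking it is $0$, consistent with $\VO_k=0$, when $k\notin\IP$); and second, the cutoff at $i=k+1$, where $g_{k+2}$ does not exist, so one must argue that the would-be $g_{k+2}$ term is absorbed into lower terms because $\Delta_{k+1}=TS(k)$ is already the whole tangent sheaf and $[g_0,g_{k+1}]\in\Delta_{k+1}$ contributes nothing new. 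Apart from these boundary checks, the argument is a routine induction fueled entirely by \eqref{obv}, \eqref{eprop}, \eqref{gbrackets}, \eqref{vifjvanish}, and the order-drop property of focal derivatives.
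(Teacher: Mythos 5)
Your argument is essentially the paper's own proof: induction on $h$, splitting each bracket by the product rule~(\ref{productrule}) into an index-advancing term controlled by Lemma~\ref{omnibus}(\ref{gbrackets}) and the inequality~(\ref{eprop}), a differentiation term controlled by the order-drop property of focal derivatives and~(\ref{obv}), and a $\Delta_3$-term, with the same separate remark for the top index; the only presentational difference is that the paper routes the brackets $[z,w]$ and $[z,g_i]$ through Corollary~\ref{oldstructuretheorem1}, whose proof is exactly your expansion of $z$ in the $g_0,g_1$ basis together with $[g_1,g_i]=0$. Two small slips do not affect the argument: the product in~(\ref{gbrackets}) runs over $k-i+3\le j\le k$ (not $k-i+1$), which is precisely the range needed for~(\ref{eprop}); and $\Delta_{k+1}=TS(k)$ is not equal to $\Delta_k$ --- the $i=k+1$ boundary case is instead handled, as in the paper, by noting that $[z,g_{k+1}]$ is already a section of the full tangent sheaf and that the resulting coefficients satisfy the required bounds via~(\ref{alsoobv}) and~(\ref{obvobv}).
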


\begin{proof}
We prove the theorem by induction on $h$.
For the base case $h=3$, the sum in (\ref{structure2}) is empty
and the assertion is that $\Delta^3=\Delta_3$, which we already know.
The theorem thus being clear for $k < 3$, we henceforth assume that $k\ge 3$.

Consider a section of $\Delta^h$; by the inductive hypothesis it may be written
as in (\ref{structure2}). Bracketing this section with a focal vector field $z$ (i.e., a section of $\Delta$) yields
\begin{equation} \label{3termtypes}
[z,w] + \sum_{i=4}^{\min\{h,k+1\}} \left(c_{hi}[z, g_i] + z(c_{hi})g_i \right).
\end{equation}
Note we have used the product rule (\ref{productrule}).
We will separately analyze the three
types of terms in (\ref{3termtypes}),
showing that each contribution satisfies the required inequality
on the orders of the coefficients.
To begin,
by Corollary~\ref{oldstructuretheorem1}
the vector field $[z,w]$ is a section of $\Delta_3 +\cI_k \Delta_4$,
and each section of $\cI_k$ on our chart is a function with
focal order at least $\VO_k \ge e_{h,4}$.

Assume that $h < k+1$.
We now deal with the second
sort of term in (\ref{3termtypes}).
By Corollary~\ref{oldstructuretheorem1},
each $[z,g_i]$ is a section of
\[\Delta_i + \left(\displaystyle\prod_{j=k-i+3}^k \cI_j\right)\Delta_{i+1}\,,\] 
and thus may be written as 
\[
w_i + \left(\displaystyle\prod_{j=k-i+3}^k c_j\right)g_{i+1}\,,
 \]
 where $w_i$ is a section of $\Delta_i$
 and 
 each function $c_j$ is a section of $\cI_j$.
 Writing
 \[
 w_i = w_3 + \sum_{j=4}^{i} a_{j} g_j
 \]
with $w_3$ a section of $\Delta_3$,
we remark that
\[
o(c_{hi}a_j) 
\ge
o(c_{hi})
\ge
e_{hi}
\ge
e_{h+1,j}
\]
for all $j$; here
we have used the inductive hypothesis
together with
properties
(\ref{alsoobv}) and (\ref{obvobv}).
 Observe that by (\ref{eprop}) we know that
 \[
 o\left(\displaystyle c_{hi}\prod_{j=k-i+3}^k c_j \right) \ge e_{hi} + \displaystyle \sum_{j=k-i+3}^k \VO_j \ge e_{h+1,i+1}\,.
 \]
 
For the third type of term in (\ref{3termtypes})
we observe that
\[o(z(c_{hi})) \ge \max\{o(c_{hi})-1,0\} \ge \max\{e_{hi}-1,0 \} = e_{h+1,i}
\]
using the inductive hypothesis
and property (\ref{obv}).

If $h \ge k+1$, the same arguments apply, except that $[z, g_{k+1}]$ is already a section of the full tangent bundle $\Delta_{k+1}$.
\end{proof}

\begin{example} \label{st2example}
As in Example~\ref{ehiRRVTVV}, we consider a point $p\in S(6)$
in chart $\cC(ooioii)$ whose code word is $RRVTVV$.
Applying Theorem~\ref{structuretheorem2} with $h=7$, we see that
in this chart each section of $\Delta^7$
may be written as
\[
w + c_{74} g_4 + c_{75} g_5  + c_{76} g_6  + c_{77} g_7, 
\]
where $w$ is a section of $\Delta_3$,
and
with $o(c_{75})\ge 1$, $o(c_{76})\ge 4$, and $o(c_{77})\ge 12$.
\end{example}

For each $i$ with $2 \le i\le k+1$, let $b_i$
be the smallest integer for which $e_{b_i,i}=0$.
Using the definition in (\ref{ehidef}), we see that
\begin{equation} \label{biformula}
b_i = i + \sum_{j=k-i+4}^k (i+j-k-3) \VO_j .
\end{equation}
We call $(b_2,b_3,\dots,b_{k+1})$ the \emph{$b$ vector}.

\begin{example} \label{redrows}
Consider the table on the left in
Example~\ref{ehiRRVTVV}. To directly use the definition of $b_i$
we look in each column for the first appearance of a zero,
concluding that $b_2=2$, $b_3=3$, $b_4=5$, $b_5=8$, $b_6=11$, and $b_7=19$.
Alternatively, using formula (\ref{biformula}),
we have, e.g.,
\[
b_7 = 7 + \VO_3 + 2\VO_4 + 3\VO_5 + 4\VO_6 = 7 + 5 + 0 + 3 + 4 = 19.
\]
In both tables, the rows indexed by $b_i$'s
are highlighted in red.
\end{example}

\begin{corollary} \label{comparison}
\phantom{x}
\begin{enumerate}
\item
For each $h\ge 2$,
\[\SG_h \le 2 + \text{the number of zero entries in row $h$ of the
table of $e_{hi}$ values}.\] 
\item
For $2\le i \le k+1$ we have
$\beta_{i+1} \ge b_i$.
\end{enumerate}
\end{corollary}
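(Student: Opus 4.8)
The plan is to deduce both inequalities directly from Theorem~\ref{structuretheorem2} together with the elementary properties (\ref{obv}), (\ref{alsoobv}), (\ref{obvobv}) of the quantities $e_{hi}$. No further geometric input is needed: once the main theorem is available, the corollary is bookkeeping with the ``table of $e_{hi}$ values''.

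For part~(1), the case $h = 2$ is the known equality $\SG_2 = 3$, matching the single zero entry $e_{22} = 0$, so I would fix $h \ge 3$ and a standard chart containing $p$. By Theorem~\ref{structuretheorem2} every section of $\Delta^h$ on the chart has the form $w + \sum_{i=4}^{\min\{h,k+1\}} c_{hi}\,g_i$, with $w$ a section of $\Delta_3$ and $o(c_{hi}) \ge e_{hi}$. The crucial observation is that whenever $e_{hi} > 0$ the inequality $o(c_{hi}) \ge e_{hi} \ge 1$ forces $c_{hi}(p) = 0$, so the term $c_{hi}\,g_i$ contributes nothing to the fibre of $\Delta^h$ at $p$. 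Hence that fibre is spanned by the $4$-dimensional fibre of $\Delta_3$ (recall $g_0, g_1, g_2, g_3$ form a basis of $\Delta_3$) together with the vectors $g_i(p)$ for those $i$ with $e_{hi} = 0$, giving $\SG_h \le 4 + \#\{\,i : 4 \le i \le \min\{h,k+1\},\ e_{hi} = 0\,\}$. Since $e_{h2} = e_{h3} = 0$ in all cases, the right-hand side equals $2$ plus the number of zero entries in row $h$, as required. (For $h = 3$ the sum is empty and this reduces to $\SG_3 = 4$.)

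For part~(2), I would first record two consequences. From (\ref{alsoobv}) the sequence $h \mapsto e_{hi}$ is non-increasing, so $e_{hi} = 0$ precisely when $h \ge b_i$; and from (\ref{obvobv}), $e_{h,i+1} = 0$ implies $e_{hi} = 0$, whence the $b$-vector is non-decreasing (with $b_2 = 2$ and $b_3 = 3$). Now fix $i$ with $2 \le i \le k+1$ and any $h$ with $h < b_i$. In row $h$ the only columns $i' \ge 4$ that can carry a zero are those with $b_{i'} \le h$, and since $b$ is non-decreasing and $h < b_i$ such $i'$ must satisfy $i' < i$; there are at most $i - 4$ of them. By part~(1) this yields $\SG_h \le 2 + \bigl(2 + (i - 4)\bigr) = i$. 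As this holds for every $h < b_i$ (the cases $i = 2, 3$ being immediate from $\SG_1 = 2$ and $\SG_2 = 3$), no entry of the small growth vector equals $i + 1$ before step $b_i$, i.e., $\beta_{i+1} \ge b_i$.

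The substance is entirely in Theorem~\ref{structuretheorem2}, so I do not expect a real obstacle. The one step deserving care is the implication $o(c_{hi}) \ge 1 \Rightarrow c_{hi}(p) = 0$, which is exactly the remark from Section~\ref{fovo} that a function of positive focal order vanishes at the base point; beyond that, one just has to keep the index ranges straight (row $h$ runs over $2 \le i \le \min\{h,k+1\}$, and $e_{h2} = e_{h3} = 0$) and track the directions of monotonicity of the $e_{hi}$-inequalities.
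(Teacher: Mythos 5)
Your proof is correct and follows essentially the same route as the paper's: part (1) is obtained from Theorem~\ref{structuretheorem2} by evaluating the expression (\ref{structure2}) at $p$ and noting that $o(c_{hi}) \ge e_{hi} > 0$ forces $c_{hi}(p) = 0$, and part~(2) is deduced from part~(1). The paper's proof of (2) is dismissed as ``immediate from the definitions''; what you supply — that $b$ is non-decreasing (from (\ref{obvobv})), that the zero entries in row $h$ are exactly those columns $i'$ with $b_{i'} \le h$ (from (\ref{alsoobv})), and hence that $h < b_i$ forces $\SG_h \le i$ — is a clean unpacking of that remark, not a different argument.
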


\begin{proof}
\phantom{x}
\begin{enumerate}
\item
Consider a section of $\Delta^h$, 
as in display (\ref{structure2}).
When we evaluate this section at $p$,
any coefficient $c_{hi}$ for which $e_{hi}>0$ will vanish.
\item \label{betavsb}
This is now immediate from the definitions of $\beta_i$ and $b_i$.
\end{enumerate}
\end{proof}

\section{Calculation pathways} \label{pathway}

In this section, we will show that the inequalities in 
Corollary~\ref{comparison} are actually equalities.
To do this, we produce some specific sections of the sheaves $\Delta^h$,
by following certain calculation pathways.

\begin{theorem}
Assume the circumstances of Theorem~\ref{structuretheorem2}.
For each $h\ge 3$
and for each $i$ with $3 \le i\le \min\{h,k+1\}$,
there is a section $f_{hi}$ of $\Delta^h$ 
for which, when we write it as in (\ref{structure2}),
the coefficient $c_{hi}$
has focal order exactly $e_{hi}$.
\end{theorem}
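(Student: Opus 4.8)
The plan is to realize each $f_{hi}$ explicitly as the endpoint of a \emph{calculation pathway}: a chain of Lie brackets
\[
g_3\ \longmapsto\ \ad_{z_1}(g_3)\ \longmapsto\ \ad_{z_2}\ad_{z_1}(g_3)\ \longmapsto\ \cdots
\]
starting from the section $g_3$ of $\Delta^3=\Delta_3$ and using, at each step, one of the two generators $f_k$, $v_k$ of $\Delta$; throughout we keep track of the $g_i$-coefficient. First I would take $i-3$ brackets with $f_k$, forming $\tau_i:=(\ad_{f_k})^{i-3}(g_3)\in\Delta^i$. By property~(\ref{gbrackets}) of Lemma~\ref{omnibus} and an easy induction, at every intermediate stage the only contribution to the newly created top coefficient comes from a bracket $[f_k,g_{j-1}]=M_jg_j$ (no surviving $f_k$-derivative term, since it is applied to the previous top coefficient, which was still zero); hence the $g_i$-coefficient of $\tau_i$ is the \emph{single} monomial $\prod_{\ell=4}^i\bigl(\prod_{h\in\IP,\,k-\ell+4\le h\le k}\nn_h\bigr)$, and reindexing the product and using $o(\nn_h)=\VO_h$ shows its focal order equals $\sum_{j=k-i+4}^k(i+j-k-3)\VO_j=e_{ii}$, the value of (\ref{ehidef}) at $h=i$. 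I would then append $h-i$ further brackets, setting $f_{hi}:=(\ad_{z_{h-i}}\cdots\ad_{z_1})(\tau_i)$ for $i\le h\le b_i$, and $f_{hi}:=f_{b_i,i}\in\Delta^{b_i}\subseteq\Delta^h$ for $h>b_i$ (legitimate since the small growth sequence is increasing and $e_{hi}=0=e_{b_i,i}$ there); the instance $i=3$ is the degenerate case $\tau_3=g_3$, $b_3=3$.

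At the $m$-th of the trailing steps, $z_m$ is chosen greedily: having formed the $g_i$-coefficient $\phi_{m-1}$, pick $z_m\in\{f_k,v_k\}$ to be whichever one lowers $o(\phi_{m-1})$ by exactly one. Such a choice exists whenever $o(\phi_{m-1})>0$, because the description of focal order in Section~\ref{fovo} as the least number of focal derivatives needed to reach a function with nonzero value at $p$ gives, on the one hand, $o(w(a))\ge o(a)-1$ for every focal $w$, and on the other hand shows that the first vector field of an optimal differentiating sequence for $a$ lowers $o$ by exactly one; since that vector field is of the form $\alpha f_k+\beta v_k$ and $o(\alpha f_k(a)+\beta v_k(a))\ge\min\{o(f_k(a)),o(v_k(a))\}$, at least one of $f_k$, $v_k$ already does. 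The crux is then the inductive claim that the $g_i$-coefficient $\phi_m$ of $(\ad_{z_m}\cdots\ad_{z_1})(\tau_i)$ has focal order exactly $e_{i+m,i}$. The governing recursion is $\phi_m=z_m(\phi_{m-1})+R_m$, where the remainder $R_m$ equals $M_i\psi_{m-1}$ --- with $\psi_{m-1}$ the $g_{i-1}$-coefficient at the previous step and $M_i$ the monomial of $[f_k,g_{i-1}]=M_ig_i$ --- when $z_m=f_k$, and $R_m=0$ when $z_m=v_k$, because $[v_k,g_{i-1}]=0$; for $i=4$ one also collects a term lying in $\cI_k\Delta_4$, furnished by Corollary~\ref{oldstructuretheorem1}. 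By the choice of $z_m$ and property~(\ref{obv}), the term $z_m(\phi_{m-1})$ has focal order exactly $e_{i+m-1,i}-1=e_{i+m,i}$; bounding $o(\psi_{m-1})\ge e_{i+m-1,i-1}$ via Theorem~\ref{structuretheorem2} and invoking property~(\ref{eprop}) (together with (\ref{alsoobv}) when $i=4$) shows $o(R_m)\ge e_{i+m,i}$. Hence $o(\phi_m)$ is the minimum of the two, namely $e_{i+m,i}$ --- \emph{provided no cancellation of lowest-order terms occurs}.

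Securing that proviso is where the hypotheses really enter. Since the pathway only ever applies the operators $f_k$ and $v_k$, and these carry a monomial in the standard coordinates to a sum of monomials with positive coefficients (Lemma~\ref{vkfkmonomial}, i.e.\ property~(\ref{monomialeffect})), every coefficient that appears along the pathway --- in particular each $\phi_m$ and $\psi_m$ --- is a polynomial in the standard coordinates with nonnegative coefficients. For any such polynomial $P$ I claim $o(P)$ equals the minimum of the focal orders of its monomials: the inequality ``$\ge$'' is the valuation property of $o$, and for ``$\le$'' one produces a regular focal curve germ $C$ through $p$ that realizes the focal orders of all coordinates occurring in $P$ and, simultaneously, has strictly positive leading coefficient in each of them --- obtained, exactly as in the computations of Section~\ref{fovo}, by prescribing positive leading coefficients for the deepest active coordinates and integrating upward. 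Along $C$ the leading coefficient of every minimal-order monomial of $P$ is then a positive number, so these terms cannot cancel and $\#(P\cdot C)=\min_\mu o(\mu)$. With this no-cancellation principle in hand the induction closes, and the coefficient $c_{hi}$ of the constructed $f_{hi}$ has focal order exactly $e_{hi}$. I expect the construction of this sign-controlled realizing focal curve to be the main obstacle; by contrast, the bookkeeping with (\ref{obv})--(\ref{eprop}) and the handling of the low-index exceptions ($i=3$, $i=4$, and $h\ge b_i$) are routine.
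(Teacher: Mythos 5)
Your proof is correct and follows the same overall strategy as the paper's: build $f_{hi}$ from $g_3$ by repeated bracketing with $g_0=f_k$ and $g_1=v_k$, keep track of the $g_i$-coefficient, and invoke the positivity property~(\ref{monomialeffect}) of Lemma~\ref{omnibus} to rule out cancellation of lowest-order terms. The differences are in the execution, and they are worth noting. The paper carries along only a single witness monomial $t_h = a\,g_i$ with $o(a)=e_{hi}$, and chooses the next bracket operator by inspecting whether that monomial involves $\nn_k$ (bracketing with $g_1$ if so, $g_0$ if not); because only one positive monomial is tracked and all contributions to the coefficient have positive monomial coefficients, the paper never needs to estimate the ``remainder'' term at all. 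You instead track the entire coefficient $\phi_m$, select $z_m$ greedily so that it reduces $o(\phi_m)$ by exactly one, and therefore must also bound the residual term $R_m=M_i\psi_{m-1}$ via Theorem~\ref{structuretheorem2} and property~(\ref{eprop}) — correct but more work. Your justification for the greedy step (via the optimal differentiating sequence of Section~\ref{fovo} and the decomposition $w=\alpha f_k+\beta v_k$) is sound; note also that the paper's own choice is a particular instance of your greedy choice, since $v_k$ kills $a$ whenever $a$ omits $\nn_k$. Finally, both arguments hinge on the principle that for a polynomial with positive coefficients the focal order is the minimum of the focal orders of its monomials; the paper asserts this in one clause, whereas you sketch the realizing-curve construction. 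Your instinct that this is the delicate point is right; the paper's positivity claim is doing real work, and spelling out the curve construction (with the caveat that ``positive leading coefficient'' should be read as ``generic and nonzero'' in settings~(2) and~(3)) would strengthen rather than deviate from the published argument.
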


\begin{proof}
We first remark that it suffices to produce $f_{hi}$
when the quantity on the top line of (\ref{ehidef}) 
is nonnegative, since in the remaining cases we
may set  $f_{hi}=f_{h-1,i}$.
We will obtain each of the other required sections by beginning
with $f_{33}=g_3$ and repeatedly Lie-bracketing on the left by
either $g_0$ or $g_1$, with the exact sequence of calculations
to be specified below. 
The basis $g_0, g_1, \dots, g_{k+1}$ has been chosen so that 
the results of these calculations have only positive coefficients,
and thus it suffices to exhibit a single term of $f_{hi}$ 
of the form $ag_i$ in which $a$ has the required focal order $e_{hi}$,
since there is no possibility of cancellation.

To produce $f_{ii}$, we use only $g_0$:
\begin{align*}
f_{33} & = g_3 \\
f_{44} & = [g_0,f_{33}] \\
f_{55} & = [g_0,f_{44}] \\
&\text{etc.}
\end{align*}

We claim that $f_{ii}$ includes the term
\begin{equation} \label{exhibit}
t_i:=\left(\prod_{\substack{j=k-i+4 \\ j \in \IP}}^k  \nn_j^{i+j-k-3} \right) g_i\,
\end{equation}
which visibly has the required focal order
\[
e_{ii}=\sum_{j=k-i+4}^k (i+j-k-3) \VO_j .
\]
This is clear by induction, beginning with
$f_{33}=g_3$
(with the product being interpreted by the usual convention to be $1$).
In the inductive step
we see that if we apply the product rule (\ref{productrule}) to $[g_0,t_i]$
we obtain two terms,
one of which is
\begin{align*}
\left(\prod_{\substack{j=k-i+4 \\ j \in \IP}}^k  \nn_j^{i+j-k-3} \right) 
[g_0,g_i]
&= \left(\prod_{\substack{j=k-i+4 \\ j \in \IP}}^k  \nn_j^{i+j-k-3} \right) 
\left(\displaystyle
\prod_{\substack{j=k-i+3 \\ j \in \IP}}^k
\nn_j
\right)
g_{i+1}
\\
&=
\left(\prod_{\substack{j=k-(i+1)+4 \\ j \in \IP}}^k  \nn_j^{(i+1)+j-k-3} \right)
g_{i+1}\,.
\end{align*}
(Note that we have used property (\ref{gbrackets}) of Lemma~\ref{omnibus} in this computation.)

In general, we define the vector field $f_{hi}$
by fixing $i$ and 
inducting on $h$, beginning with $f_{ii}$.
Having singled out a term 
\begin{equation}\label{aterm}
t_h=ag_i
\end{equation}
in $f_{hi}$
with $o(a)=e_{hi}$,
we examine the coefficient $a$
to see whether it includes the coordinate function $\nn_k$.
If so, we define $f_{h+1,i}=[g_1,f_{hi}]$;
then by the product rule $f_{h+1,i}$ includes the term
\[
t_{h+1}:=g_1(a)g_i = v_k(a)g_i = \frac{\partial}{\partial \nn_k}(a) g_i \,,
\]
in which the coefficient has focal order $o(a)-1=e_{hi}-1=e_{h+1,i}$.
If the coefficient $a$ in (\ref{aterm}) does not include $\nn_k$,
then we define $f_{h+1,i}=[g_0,f_{hi}]$.
By the product rule $f_{h+1,i}$ includes $g_0(a)g_i = f_k(a)g_i$,
which may consist of multiple terms. There is, however, at least one term,
and each one is of the form $a'g_i$, where the coefficient has focal order
$o(a')=o(a)-1=e_{hi}-1=e_{h+1,i}$.
Select one of these terms and call it $t_{h+1}$.
This induction continues until we reach the first zero value
for $e_{hi}$, i.e., until we reach $h=b_i$.
\end{proof}

\begin{corollary} \label{bridge}
The inequalities of Corollary~\ref{comparison} are in fact equalities:
\begin{enumerate}
\item
For each $h\ge 2$,
\[\SG_h = 2 + \text{the number of zero entries in row $h$ of the
table of $e_{hi}$ values}.\] 
\item
For $2\le i \le k+1$ we have
$\beta_{i+1} = b_i$.
\end{enumerate}
\end{corollary}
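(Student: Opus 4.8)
The plan is to upgrade both inequalities of Corollary~\ref{comparison} to equalities by feeding the sections $f_{hi}$ just constructed into them, the only genuinely new ingredient being a linear-independence check at $p$. Since ``$\le$'' in~(1) and ``$\ge$'' in~(2) are exactly Corollary~\ref{comparison}, and since~(2) will drop out formally once~(1) is known, everything reduces to the reverse inequality in~(1). So fix $h\ge 3$ (the cases $h=2,3$ are immediate: $\SG_2=3$ and $\SG_3=4$ match the one resp.\ two zero entries in those rows, and $\Delta^3=\Delta_3$). Let $m_h$ be the largest index $i$ with $2\le i\le\min\{h,k+1\}$ and $e_{hi}=0$; because $e_{h2}=e_{h3}=0$ and $e_{h,\cdot}$ is nondecreasing by~(\ref{obvobv}), the zero entries of row $h$ are exactly the columns $2,\dots,m_h$. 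Thus the number of zeros in row $h$ is $m_h-1$, and it remains to prove $\SG_h\ge m_h+1$.

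To get this I would exhibit $m_h+1$ linearly independent vectors in the fiber $(\Delta^h)_p$: the four vectors $g_0(p),\dots,g_3(p)$ spanning the fiber of $\Delta_3\subseteq\Delta^h$, together with $f_{h,4}(p),\dots,f_{h,m_h}(p)$ from the preceding theorem (these indices are in range since $m_h\le\min\{h,k+1\}$). Working modulo $\Delta_3$, i.e., in $TS(k)_p/(\Delta_3)_p$ with its basis $\bar g_4(p),\dots,\bar g_{k+1}(p)$, it suffices to show the images of $f_{h,4}(p),\dots,f_{h,m_h}(p)$ are independent. Writing $f_{hi}$ in the form~(\ref{structure2}) with coefficients $c^{(i)}_{hj}$, its image is $\sum_{j\ge 4}c^{(i)}_{hj}(p)\,\bar g_j(p)$; the $\bar g_j$-coefficient vanishes for $j>m_h$ since $o(c^{(i)}_{hj})\ge e_{hj}>0$, while the $\bar g_i$-coefficient $c^{(i)}_{hi}(p)$ is nonzero since $o(c^{(i)}_{hi})=e_{hi}=0$.

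The crux, and the step I expect to take the most care, is showing that no $\bar g_j$ with $4\le j<i$ occurs in the image of $f_{hi}$, so that the $(m_h-3)\times(m_h-3)$ matrix $\bigl(c^{(i)}_{hj}(p)\bigr)_{4\le i,j\le m_h}$ is triangular with nonvanishing diagonal, hence invertible; this yields the independence and so $\SG_h\ge m_h+1$, proving~(1). I would read this off the way $f_{hi}$ is built in the preceding proof: it starts from $f_{ii}=t_i g_i$, a single term on $g_i$, and is obtained by repeatedly Lie-bracketing on the left with $g_0$ or $g_1$. By property~(\ref{gbrackets}) of Lemma~\ref{omnibus}, $[g_0,g_j]$ is a monomial times $g_{j+1}$ for $1\le j\le k$, and $[g_0,g_{k+1}]=0$ by~(\ref{vacuous}), while $[g_1,g_j]=[v_k,g_j]=0$ for $j\ge 2$ by~(\ref{vifjvanish}); hence every such bracket only keeps or raises the indices of the $g_j$'s present, and starting at index $i$ it never introduces any $g_j$ with $j<i$ (in particular never a $g_0$ or $g_1$). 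One must also confirm that the construction stays within the range $1\le j\le k$ where~(\ref{gbrackets}) applies and that the singled-out term of $f_{hi}$ is not cancelled by another, but the latter is precisely the positivity-of-coefficients observation already made in the preceding proof.

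For~(2): the small growth vector is nondecreasing with unit steps and runs from $2$ up to $m=k+2$, so for $i\ge 2$ we have $\beta_{i+1}=\min\{h\ge 2:\SG_h\ge i+1\}$ (using $\SG_1=2<i+1$). By~(1), $\SG_h=m_h+1$, so this equals $\min\{h\ge 2:m_h\ge i\}=\min\{h\ge 2:e_{hi}=0\}$, the last step using that $e_{h,\cdot}$ is nondecreasing; and by~(\ref{alsoobv}) together with the definition of $b_i$ this minimum is $b_i$. Hence $\beta_{i+1}=b_i$ for $2\le i\le k+1$. Everything in~(2), and the framing of~(1), is then direct manipulation of the definitions and the two already-established results; only the triangularity argument is substantive.
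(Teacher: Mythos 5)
Your overall strategy --- exhibit the sections $f_{h,4},\dots,f_{h,m_h}$, pass to the quotient by $\Delta_3$, and show the resulting coordinate matrix against $\bar g_4,\dots,\bar g_{m_h}$ is triangular with nonzero diagonal --- is the right idea, and your deduction of part~(2) from part~(1) is correct. But the triangularity you argue for goes the wrong way, and your argument for it rests on a false premise. You write that $f_{ii}$ is ``a single term on $g_i$,'' so that subsequent brackets with $g_0,g_1$ never produce a $g_j$ with $j<i$. In fact the proof of the preceding theorem only asserts that $f_{ii}$ \emph{includes} the term $t_i$; the field $f_{ii}$ is built up from $g_3$, and each application of the product rule $[g_0,ag_j]=a[g_0,g_j]+g_0(a)g_j$ retains the index-$j$ component alongside the raised index-$(j+1)$ one. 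Concretely, in chart $\cC(ooioii)$ one computes $f_{66}=\nn_5^2\nn_6^3\,g_6+\nn_6^2\,g_5$, so $f_{66}$ genuinely carries a $g_5$-component. Hence ``no $\bar g_j$ with $4\le j<i$ occurs in $f_{hi}$'' is not true, and the order bound $o(c^{(i)}_{hj})\ge e_{hj}$ gives no vanishing for $j<i\le m_h$, since there $e_{hj}=0$.

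The correct triangularity comes from the other side. For $4\le i\le m_h$ one has $b_i\le h$, and by the construction in the proof (which sets $f_{hi}=f_{h-1,i}$ once $h$ exceeds $b_i$) in fact $f_{hi}=f_{b_i,i}$, a section of $\Delta^{b_i}$. Applying Theorem~\ref{structuretheorem2} at level $b_i$ rather than $h$ gives $o(c^{(i)}_{hj})\ge e_{b_i,j}$; since the $b$ vector is strictly increasing (immediate from formula~(\ref{biformula})), we have $e_{b_i,j}>0$ for every $j>i$, and therefore $c^{(i)}_{hj}(p)=0$ for all $j>i$. Together with the nonvanishing diagonal, the matrix $\bigl(c^{(i)}_{hj}(p)\bigr)_{4\le i,j\le m_h}$ is thus \emph{lower}-triangular (not upper-triangular as you asserted), hence still invertible, and your conclusion $\SG_h\ge m_h+1$ follows exactly as you outlined.
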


\begin{example}
As in Example~\ref{ehiRRVTVV}, 
we work in chart $\cC(ooioii)$.
For the $RRVTVV$ point at the origin, here is a calculation 
leading to the vector field $f_{19,7}$.

\begin{center}
\begin{tabular}{c|c|c}
vector field & term in this vector field & focal order of its coefficient \\
\hline
$f_{33}=g_3$ 		& $t_3=g_3$ & 0 \\
$f_{44}=[g_0,f_{33}]$ 	& $t_4=\nn_6 g_4$ & 1 \\
$f_{55}=[g_0,f_{44}]$ 	& $t_5=\nn_5 {\nn_6}^2 g_5$ & 3 \\
$f_{66}=[g_0,f_{55}]$	& $t_6={\nn_5}^2 {\nn_6}^3 g_6$ & 5 \\
$f_{77}=[g_0,f_{66}]$	& $t_7=\nn_3{\nn_5}^3 {\nn_6}^4 g_7$ & 12 \\
\hline
$f_{87}=[g_1,f_{77}]$ & $t_8=4\,\nn_3 {\nn_5}^3 {\nn_6}^3 g_7$ & 11 \\
$f_{97}=[g_1,f_{87}]$ & $t_9=12\,\nn_3 {\nn_5}^3 {\nn_6}^2 g_7$ & 10 \\
$f_{10,7}=[g_1,f_{97}]$ & $t_{10}=24\,\nn_3 {\nn_5}^3 \nn_6 g_7$ & 9 \\
$f_{11,7}=[g_1,f_{10,7}]$ & $t_{11}=24\,\nn_3 {\nn_5}^3 g_7$ & 8 \\
$f_{12,7}=[g_0,f_{11,7}]$ & $t_{12}=24\,\nn_4 {\nn_5}^4 \nn_6 g_7$ & 7 \\
$f_{13,7}=[g_1,f_{12,7}]$ & $t_{13}=24\,\nn_4 {\nn_5}^4 g_7$ & 6 \\
$f_{14,7}=[g_0,f_{13,7}]$ & $t_{14}=24\,{\nn_5}^4 \nn_6 g_7$ & 5 \\
$f_{15,7}=[g_1,f_{14,7}]$ & $t_{15}=24\,{\nn_5}^4 g_7$ & 4 \\
$f_{16,7}=[g_0,f_{15,7}]$ & $t_{16}=96\,{\nn_5}^3 g_7$ & 3 \\
$f_{17,7}=[g_0,f_{16,7}]$ & $t_{17}=288\,{\nn_5}^2 g_7$ & 2 \\
$f_{18,7}=[g_0,f_{17,7}]$ & $t_{18}=576\,\nn_5 g_7$ & 1 \\
$f_{19,7}=[g_0,f_{18,7}]$ & $t_{19}=576\, g_7$ & 0 \\
\end{tabular}
\end{center}

\bigskip
\noindent
(By a laborious calculation one discovers that in fact $f_{19,7}=t_{19}$.)

\end{example}

\section{Relating the structural and small growth invariants} \label{tsgi}

Corollary~\ref{bridge} provides a bridge between the
structural invariants and the small growth invariants,
which we now explain further.
Looking at a point $p\in S(k)$,
we consider three structural invariants:
\begin{enumerate}
\item
The $b$ vector 
\[(b_2,b_3,\dots,b_{k+1})\]
has been defined in Section~\ref{secondstructure}.
\item
From the multiplicity sequence $m_0,m_1,\dots$ 
of any curve on $S$ whose lift is a
regular focal curve through $p$,
we extract the entries $m_1$ through $m_{k-1}$,
writing them in reverse order.
We call 
\[
(m_{k-1},m_{k-2},\dots,m_1)
\]
the \emph{multiplicity vector}.
For the definition of the multiplicity sequence,
see Section~13 of \cite{MR4887124} or 
Section 3.5 of \cite{MR2107253}.
The omitted value $m_0$ is not a Goursat invariant,
and at the other end we have $m_i=1$ for $i\ge k$.
\item
We consider
the restricted vertical orders vector 
\[
(\VO_k, \VO_{k-1},\dots,\VO_3),
\]
writing the entries in reverse order from the convention used in \cite{MR4887124}.
\end{enumerate}
There are three corresponding small growth invariants,
namely the beta vector and its first two derived vectors,
denoted $\der$ and $\der^2$.

By Theorem~33 of \cite{MR4887124}, we have
$m_i-m_{i+1}=\VO_{i+2}$;
this tells us that the (reversed) restricted vertical orders vector
is the derived vector of the multiplicity vector, i.e.,
its vector of successive differences.\footnote{We take the opportunity to correct a slight error in the range of applicability
of Theorem~33 in that earlier paper; the appropriate range there
should be $k\le j \le r-2$.}
Going in the opposite direction,
we see that the vertical orders are obtained from
the multiplicities by accumulation:
\begin{equation} \label{accum}
m_i=1+\sum_{j=i+2}^k \VO_j
\end{equation}
(since we automatically have $m_{k-1}=1$). 
Using the entries in the $b$ vector
as specified by (\ref{biformula}), 
we compute that
\[
b_{i+1}-b_i=1+\sum_{j=k-i+3}^k \VO_j 
\]
and then compare with (\ref{accum})
to conclude that $b_{i+1}-b_i=m_{k-i+1}$.
This tells us that the multiplicity vector
is the derived vector of the $b$ vector.
From these remarks and Corollary~\ref{bridge}
we obtain the following result.

\begin{theorem}
The $b$ vector is
obtained from the beta vector by removing its initial entry.
Therefore the multiplicity vector is obtained
from $\der$, the derived vector of the beta vector, 
by removing its initial entry,
and the (reversed) restricted vertical orders vector
is obtained from $\der^2$ by removing its initial entry.
Conversely one obtains $\beta$, $\der$, and $\der^2$
from the $b$ vector, the multiplicity vector, and the
reversed restricted vertical orders vector by
inserting the initial entries 1, 1, 0 (respectively).
\end{theorem}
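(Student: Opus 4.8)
The plan is to deduce the theorem entirely from part~(2) of Corollary~\ref{bridge} together with the two ``derived vector'' identities recorded in the paragraphs immediately preceding the theorem. First I would pin down the indexing conventions: the beta vector is $(\beta_2,\dots,\beta_m)$ with $m=k+2$, while the $b$ vector is $(b_2,\dots,b_{k+1})$. Corollary~\ref{bridge}(2) gives $\beta_{i+1}=b_i$ for $2\le i\le k+1$; reindexing, $b_j=\beta_{j+1}$ for $2\le j\le k+1$, so the $b$ vector is literally $(\beta_3,\beta_4,\dots,\beta_{k+2})=(\beta_3,\dots,\beta_m)$, i.e., the beta vector with its first entry $\beta_2$ deleted. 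This establishes the first assertion.

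Next I would chain this observation with the two difference relations already in hand: that the multiplicity vector is the vector of successive differences of the $b$ vector (from $b_{i+1}-b_i=m_{k-i+1}$, itself derived from~(\ref{biformula}) and~(\ref{accum})), and that the reversed restricted vertical orders vector is the vector of successive differences of the multiplicity vector (from Theorem~33 of \cite{MR4887124}, i.e., $m_i-m_{i+1}=\VO_{i+2}$). The elementary fact to invoke is that for any finite sequence, deleting the first term and then taking successive differences gives the same result as taking successive differences and then deleting the first term; this is immediate from the telescoping definitions. Applying it twice, starting from the fact that the $b$ vector is $\beta$ with its initial entry removed, yields that the multiplicity vector is $\der$ with its initial entry removed and that the reversed restricted vertical orders vector is $\der^2$ with its initial entry removed, where $\der$ and $\der^2$ are the derived vector and second derived vector of $\beta$ as defined in Section~\ref{smallgrowthsection}.

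For the converse direction I would simply note that the three deleted leading entries are universal constants: $\beta_2=1$ for every rank~$2$ Goursat germ (since $\SG_1=2$), whence $\der_3=\beta_3-\beta_2=1$ (the beta vector begins $1,2,3$, so $\der_3=\der_4=1$), whence $\der^2_4=\der_4-\der_3=0$. Therefore prepending $1$, then $1$, then $0$ recovers $\beta$, $\der$, and $\der^2$ from the $b$ vector, the multiplicity vector, and the reversed restricted vertical orders vector, respectively.

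I do not anticipate a genuine obstacle here: all of the mathematical substance already lives in Corollary~\ref{bridge} and in the computations with~(\ref{biformula}) and~(\ref{accum}) carried out just above the theorem. The only thing demanding care is the index bookkeeping --- keeping straight the shift between $m=k+2$ and the starting indices $2$, $3$, $4$ of the beta, $b$, multiplicity, restricted vertical orders, and second derived vectors --- together with verifying the trivial ``truncate-then-difference equals difference-then-truncate'' lemma, which I would state once and reuse.
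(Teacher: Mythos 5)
Your proof is correct and takes essentially the same route as the paper: establish that Corollary~\ref{bridge}(2) identifies the $b$ vector as $\beta$ with its initial entry deleted, then push through the two successive-difference identities (already set up before the theorem via~(\ref{biformula}), (\ref{accum}), and $m_i-m_{i+1}=\VO_{i+2}$) using the observation that truncation commutes with taking differences, and finally recover the deleted entries from the universal initial segment $\beta_2,\beta_3,\beta_4=1,2,3$. The paper leaves the truncate/difference commutation implicit, whereas you state it as a small lemma, but the substance and structure coincide.
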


\begin{example}
For a point with RVT code word $RRVTVV$, we have $k=6$ and
\begin{align*}
(\beta_2,\dots, \beta_8)&= (1,2,3,5,8,11,19)   &(b_2,\dots, b_7)&= (2,3,5,8,11,19) \\
(\der_3,\dots, \der_8)&= (1,1,2,3,3,8)            &(m_5,\dots, m_1)&= (1,2,3,3,8) \\
({\der^2}_4,\dots, {\der^2}_8)&= (0,1,1,0,5)     &(\VO_6,\dots, \VO_3)&= (1,1,0,5).
\end{align*}
\end{example}

\section{Recursions} \label{recursions}

Here we address the issue of effective calculations.
If we know any one of the six invariants of Section~\ref{tsgi},
we can easily obtain the other five; thus the issue
is how to gain the first toehold.

In Section~\ref{secondstructure} we presented
a method for determining the vertical orders:
working in a chart containing our selected point $p\in S(k)$, calculate
the focal orders of the standard coordinates;
each vertical order equals one of these focal orders or is zero.
This is a rather cumbersome method, however, involving
parallel calculations for the focal orders of
both the coordinates and their differentials.

The proximity diagram of Section~13 of \cite{MR4887124}
allows one to easily compute the multiplicity sequence. This seems to be the most efficient method for obtaining all six of our invariants.
(Regarding proximity diagrams, see also Section 3.5 of \cite{MR2107253}, but note that Wall does
not use code words.) 

Implicit in the use of the proximity diagram is a front-end
recursion, meaning a recursion
for computing an invariant that uses alterations
to the front end of the code word.
Given a Goursat word $W$, we obtain its \emph{lifted Goursat word} $\LG(W)$
by the following procedure:
\begin{itemize}
\item
Remove the first symbol $R$.
\item
If the new second symbol is $V$ (i.e., if the truncated word fails to be Goursat), replace it by $R$,
and likewise replace any immediately succeeding $T$'s
by $R$'s. Stop when you reach the next $R$, the next $V$, or the end of the word.
\end{itemize}

\noindent
The terminology comes from a consideration of focal curves
on the monster spaces and their lifts into the monster tower.
We introduced a similar procedure in
Section~10.5 of \cite{MR4887124}, where we defined the lifted word associated to an arbitrary RVT code word; here, by contrast, we begin and end with Goursat words.

The proximity diagram for $W$ is obtained from the
proximity diagram for $\LG(W)$ as follows:
\begin{itemize}
    \item 
    Put a new vertex to the left (in position 0)
    and draw a horizontal edge to the vertex in position 1.
    \item
    From the vertex in position 1, draw edges rightward
    to the vertices whose labels have changed (if any);
    these vertices will be labeled by a block $VT^\tau$.
\end{itemize}
This procedure is shown in Figure~\ref{prox},
which also illustrates the recursive rule
for calculating the multiplicity $m_i$:
sum the multiplicities on the vertices proximate to vertex $i$.
(The leftmost vertex is vertex 0.)
Appending $m_1$ to the end of the derived vector for $\LG(W)$ gives us the derived  vector for $W$.
Similarly, one obtains each of our five other invariants for a Goursat word $W$ by appending the appropriate value to the end of the same invariant for $\LG(W)$;
the easiest way to find this value seems to be to first work out the multiplicities.

\begin{figure}[!htb] 
\begin{center}
\includegraphics[scale=0.8]{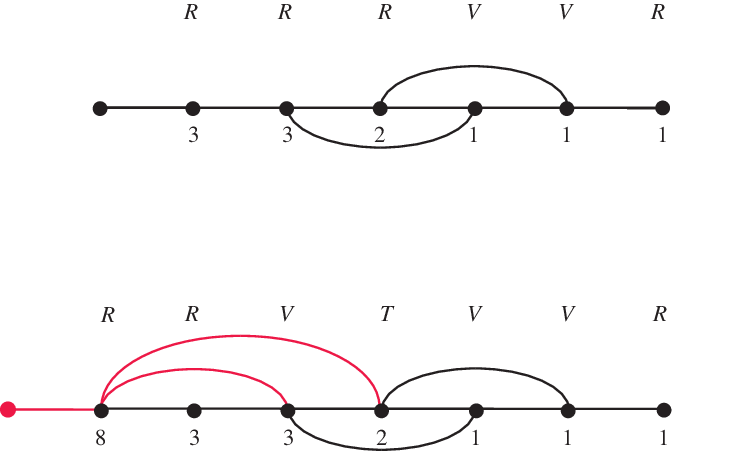}
\end{center}
\caption{Obtaining the proximity diagram
for the Goursat word $RRVTVVR$ from the proximity diagram
for its lifted Goursat word $RRRVVR$;
the new multiplicity is $m_1=3+3+2$.}
\label{prox}
\end{figure}

\begin{example}
Here is the front-end calculation of the derived vector of
$RRVTRRRVTTTV$:
\begin{align*}
\der(RR) &= (1,1) \\
\der(RRV) &= (1,1,2) \\
\der(RRRRRV) &= (1,1,2,2,2,2) \\
\der(RRVTTTV) &= (1,1,2,2,2,2,9) \\
\der(RRRRRRVTTTV) &= (1,1,2,2,2,2,9,9,9,9,9) \\
\der(RRVTRRRVTTTV) &= (1,1,2,2,2,2,9,9,9,9,9,27) \\
\end{align*}
\end{example}

There are also back-end recursions for our invariants,
dating back to the work of Jean \cite{MR1411581},
who stated such a recursion for the beta vector.
He stated and proved it
while studying the configuration space for a physical
system: a truck pulling multiple trailers.
This was before the later work of 
Montgomery and Zhitomirskii
\cite{MR1841129}, who showed that
the truck-and-trailers configuration space
is a universal space for Goursat distributions.
More precisely, the configuration space for a truck with $n$ trailers is the oriented version
of the monster space $\mathbb{R}^2(n+1)$ over the real plane;
it is thus an unramified cover of degree $2^{n+1}$ over
 $\mathbb{R}^2(n+1)$.
Jean's recursion is stated
in terms of special angles between trailers,
and his proof of the recursion is achieved via an
elaborate recursive calculation in trigonometry.
Jean's work also predates the introduction of code words
for coarse classification of Goursat distributions;
see \cite{MR2172057} for an early instance.

Here we will show that Jean's back-end recursion
is a consequence of our front-end recursion,
thus extending its validity to all Goursat
distributions in all three settings of Section~\ref{3settings}.

\begin{theorem}
For every nonempty Goursat word $\beta_2(W)=1$,
and $\beta_3(W)=2$ for every word of length at least 2.
Letting $X$ represent any single symbol, we have
these recursive formulas:
\begin{enumerate}
\item 
$\beta_j(W\!R)=1+\beta_{j-1}(W)$,
\item 
$\beta_j(W\!XV)=\beta_{j-1}(W\!X)+\beta_{j-2}(W)$,
\item 
$\beta_j(W\!XT)=2\beta_{j-1}(W\!X)-\beta_{j-2}(W)$.
\end{enumerate}
\end{theorem}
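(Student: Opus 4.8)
The plan is to derive Jean's back-end recursions as consequences of the front-end recursion encoded in the lifted Goursat word construction, together with the explicit formula $b_{i+1}-b_i = m_{k-i+1}$ relating the $b$ vector to the multiplicity vector, and the proximity-diagram rule for computing multiplicities. First I would dispose of the base cases: $\beta_2(W)=1$ and $\beta_3(W)=2$ follow immediately from the remark in Section~\ref{smallgrowthsection} that the small growth vector of any rank-2 Goursat distribution begins $2,3,4$, hence $\beta=(1,2,3,\dots)$ for words of length at least $2$. So the content is in the three recursive formulas, and by the dictionary of Section~\ref{tsgi} (the $b$ vector is $\beta$ with its first entry removed) it is equivalent to prove the corresponding statements for the $b$ vector, or even more conveniently, for the multiplicity vector $(m_{k-1},\dots,m_1)$, which is the derived vector of the $b$ vector.

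The key step is to track, under each of the three operations $W\mapsto W\!R$, $W\!X\mapsto W\!XV$, $W\!X\mapsto W\!XT$, how the proximity diagram and hence the multiplicity sequence changes, and then translate back through $b_{i+1}-b_i=m_{k-i+1}$ (equivalently $m_i = 1+\sum_{j=i+2}^k \VO_j$) and $\beta_{i+1}=b_i$. For case (1), appending $R$ corresponds to the clean case of the lifted-word procedure: $\LG(W\!R)=W\!R$ read off by removing the first $R$ gives back a word built from $W$, with one new leftmost vertex joined by a horizontal edge; the new multiplicity is $m_1 = m_1(W\text{-diagram shifted})$, and since every $\beta$ index shifts by one while the degree of nonholonomy increases by exactly the new multiplicity contribution, one gets $\beta_j(W\!R)=1+\beta_{j-1}(W)$ directly. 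For cases (2) and (3) the lifted word of $W\!XV$ is (essentially) $W\!XR\cdots$, and the lifted word of $W\!XT$ involves converting a further $T$; the proximity rule "sum the multiplicities on vertices proximate to vertex $i$" then yields a second-order linear recursion in the multiplicities, whose two characteristic behaviors are exactly the $+$ and $2\cdot{-}$ patterns in Jean's formulas. Concretely, I expect to show $m_1(W\!XV)$ equals a sum that telescopes into $\beta_{j-1}(W\!X)+\beta_{j-2}(W)-\beta_{j-1}(W)$-type expressions collapsing to the stated form, and similarly the $T$ case doubles the previous increment and subtracts the one before, reflecting that a $T$ prolongs along the same divisor at infinity.

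The main obstacle will be bookkeeping the index shifts and the boundary behavior of the lifted-word rule cleanly enough that the telescoping is transparent rather than a morass of case distinctions on whether the symbol immediately after the stripped leading $R$ is $R$, $V$, or $T$. In particular I would need to be careful that in case (3) the symbol being appended, $T$, interacts correctly with the clause in the definition of $\LG$ that replaces "any immediately succeeding $T$'s by $R$'s" — so the recursion for $W\!XT$ must be set up so that $\LG(W\!XT)$ and $\LG(W\!X)$ differ in a controlled way. I would handle this by first proving the recursion for the $b$ vector in the form $b_i(W\!R)=1+b_{i-1}(W)$, $b_i(W\!XV)=b_{i-1}(W\!X)+b_{i-2}(W)$, $b_i(W\!XT)=2b_{i-1}(W\!X)-b_{i-2}(W)$, using only the multiplicity accumulation formula (\ref{accum}) and the proximity rule, and then invoke the theorem of Section~\ref{tsgi} to transfer everything verbatim to $\beta$. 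The one genuinely delicate point — worth isolating as its own short lemma — is verifying that appending $V$ or $T$ to $W\!X$ changes the vertical orders vector precisely in the way dictated by the one new vertex (and its proximate set) in the proximity diagram, which is where the front-end picture of Figure~\ref{prox} does the real work.
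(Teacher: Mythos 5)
Your overall framework is sound and matches the paper's: reduce the claim to a recursion for the derived vector (equivalently the multiplicity vector), and compute via the proximity diagram. But the mechanism you propose for carrying this out contains concrete errors that would derail the argument.

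The central problem is your description of how the lifted-word operation $\LG$ interacts with appending a symbol. You write that $\LG(W\!XV)$ is ``essentially $W\!XR\cdots$,'' but this is not so: the substitution clause in the definition of $\LG$ only replaces $V$ and the $T$'s immediately following it at the \emph{front} of the truncated word; a $V$ (or $T$) sitting at the far right end is untouched, so $\LG(W\!XV)$ still ends in $V$ and $\LG(W\!XT)$ still ends in $T$. Similarly, for case (1) you describe $W\!R$ as acquiring ``one new leftmost vertex,'' which is the front-end picture (the vertex added in passing from the diagram of $\LG(W\!R)$ to the diagram of $W\!R$); what is actually relevant to the recursion $\beta_j(W\!R)=1+\beta_{j-1}(W)$ is the opposite observation: the proximity diagram of $W\!R$ is that of $W$ with one new vertex appended at the \emph{right}, labeled $1$, joined by a single horizontal edge. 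Your plan conflates these two constructions, and the telescoping you anticipate would not come out cleanly with the lifted words described incorrectly.

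The paper's proof sidesteps the lifted-word bookkeeping entirely. Having observed that the recursions for $\beta$ are equivalent to $\der_j(W\!R)=\der_{j-1}(W)$, $\der_j(W\!XV)=\der_{j-1}(W\!X)+\der_{j-2}(W)$, $\der_j(W\!XT)=2\der_{j-1}(W\!X)-\der_{j-2}(W)$, it compares the \emph{right ends} of the proximity diagrams of the three words directly (for (2): the three sub-cases $W\!RV/W\!R/W$, $WVV/WV/W$, $WTV/WT/W$; for (3): $WVT/WV/W$, $WTT/WT/W$). In each case one checks the stated linear relation holds at the rightmost few (circled) vertices; since all edges further left are shared among the three diagrams, the relation propagates leftward by the proximity rule. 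No induction through $\LG$ is needed, and the ``genuinely delicate point'' you isolate—how appending $V$ or $T$ changes the vertical orders—is handled by simply drawing the new right-end edges and summing. Your outline could in principle be rescued by a front-end induction (using that $\LG(W\!S)=\LG(W)S$ for $|W|$ large enough), but as written the lifted-word claims are false, and the cleaner right-end comparison is what actually closes the argument.
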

\begin{proof}
We first observe that Jean's recursions are equivalent
to the following recursions for computing the derived vector, which we proceed to prove:
\begin{enumerate}
\item \label{addR}
$\der_j(W\!R)= \der_{j-1}(W)$,
\item \label{addV}
$\der_j(W\!XV)=\der_{j-1}(W\!X)+\der_{j-2}(W)$,
\item \label{addT}
$\der_j(W\!XT)=2\der_{j-1}(W\!X)-\der_{j-2}(W)$.
\end{enumerate}

Using the derived vector to label the vertices of
the proximity diagram, let $\der_v(W)$ indicate the
component of $\der_v(W)$ labeling vertex $v$.
Note that $\der$ is essentially the multiplicity vector;
thus we use the same rules to compute it from the 
diagram.
One obtains the proximity diagram of
$W\!R$ from that of $W$ by adding a single vertex
at the right, together with a single horizontal edge.
The label at this vertex is 1, and the label at
each other vertex is unchanged. This establishes
(\ref{addR}).

To establish (\ref{addV}), we consider three cases.
\begin{itemize}
\item 
Compare the right ends of the proximity diagrams
of $W\!RV$, $W\!R$, and $W$:
\begin{center}
\includegraphics[scale=0.85]{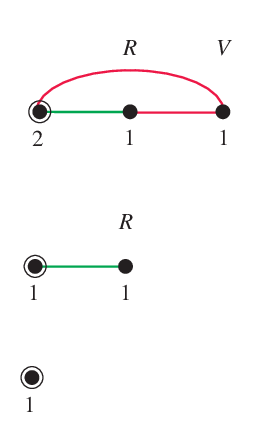}
\end{center}
\noindent Observe that at the circled vertex we have 
$\der_v(W\!RV)=\der_v(W\!R)+\der_v(W)$.
All the undrawn edges are common to all three diagrams;
thus the same equation holds for all vertices further
to the left.
\item
Compare the right ends of the proximity diagrams
of $WVV$, $WV$, and $W$:
\begin{center}
\includegraphics[scale=0.85]{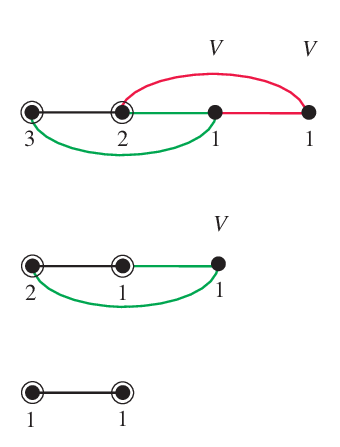}
\end{center}
\noindent Note that we show only those edges that begin and end
within the indicated portion of the diagrams;
there may or may not be an additional edge coming in from
the left, but it is common to all three diagrams.
Observe the equality 
$\der_v(WVV)=\der_v(WV)+\der_v(W)$
at the circled vertices.
This must persist for all vertices further to the left of these. 
\item
Compare the right ends of the proximity diagrams
of $WTV$, $WT$, and $W$:
\begin{center}
\includegraphics[scale=0.8]{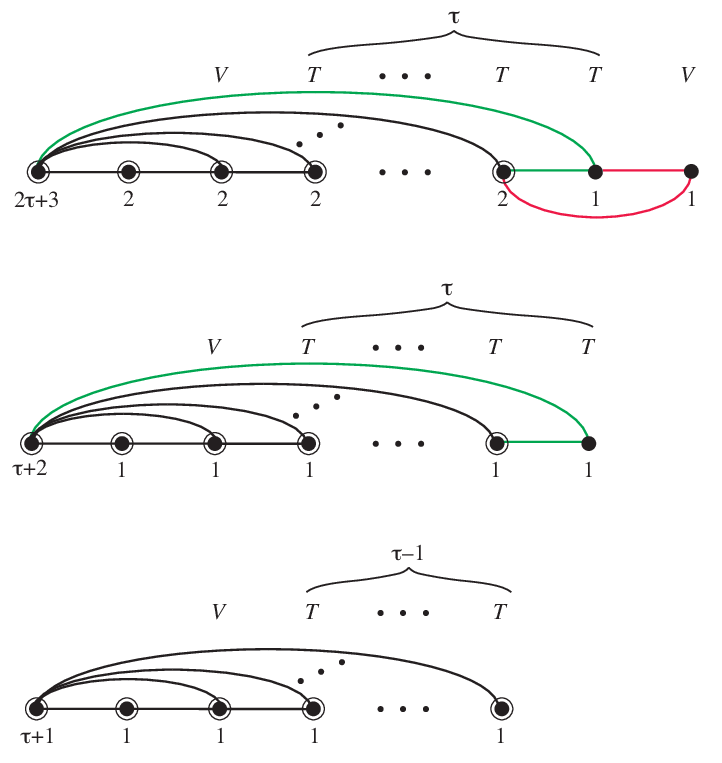}
\end{center}
\noindent Again we show only those edges that begin and end within the indicated portion of the diagrams. Observe that $W$ ends with $VT^{\tau-1}$ for some positive integer $\tau$. At the circled vertices we have $\der_v(WTV) = \der_v(WT) +\der_v(W)$; thus the same equation must hold at every vertex further to the left.

\end{itemize}

To establish (\ref{addT}), we consider two cases.

\begin{itemize}
\item
Compare the right ends of the proximity diagrams
of $WVT$, $WV$, and $W$:
\begin{center}
\includegraphics[scale=0.8]{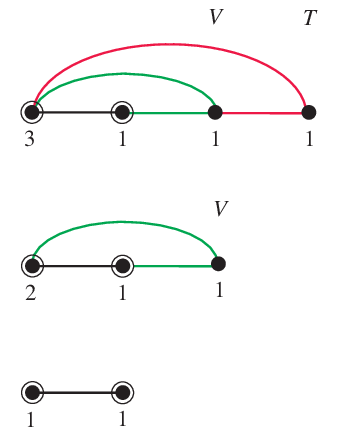}
\end{center}
The equality $\der_v(WVT) = 2\der_v(WV) - \der_v(W)$ holds at the two circled vertices, and thus at every vertex further to the left.
\item
Compare the right ends of the proximity diagrams of $WTT$, $WT$, and $W$: 
\begin{center}
\includegraphics[scale=0.8]{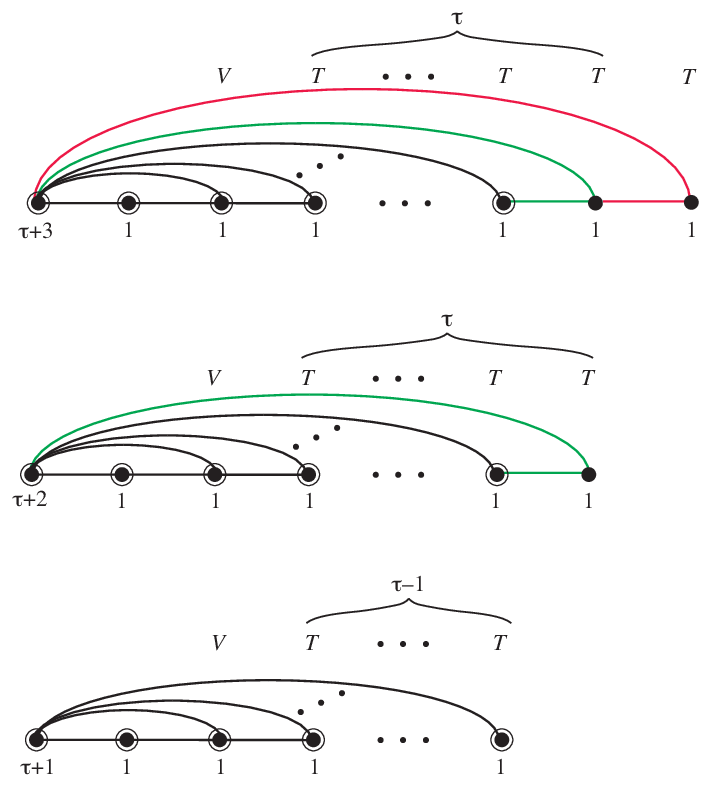}
\end{center}
The word $W$ ends with $VT^{\tau-1}$ for some positive integer $\tau$. At the circled vertices we have 
$\der_v(WTT) = 2\der_v(WT) - \der_v(W)$; thus the same equation must hold at every vertex further to the left.
\end{itemize}
\end{proof}

\begin{example}
Here is the back-end calculation of the derived vector of
$RRVTRRRVTTTV$:
\begin{align*}
\der(RR)&=(1,1)\\
\der(RRV)&=(1,1,2)\\
\der(RRVT)&=(1,1,1,3)\\
\der(RRVTR)&=(1,1,1,1,3)\\
\der(RRVTRR)&=(1,1,1,1,1,3)\\
\der(RRVTRRR)&=(1,1,1,1,1,1,3)\\
\der(RRVTRRRV)&=(1,1,2,2,2,2,2,6)\\
\der(RRVTRRRVT)&=(1,1,1,3,3,3,3,3,9)\\
\der(RRVTRRRVTT)&=(1,1,1,1,4,4,4,4,4,12)\\
\der(RRVTRRRVTTT)&=(1,1,1,1,1,5,5,5,5,5,15)\\
\der(RRVTRRRVTTTV)&=(1,1,2,2,2,2,9,9,9,9,9,27)\\
\end{align*}

\end{example}

For completeness, here is the back-end recursion for the second derived vector
(i.e., for the vertical orders).
To begin, we have
\[
({\der^2}_4(W),{\der^2}_5(W)) = 
\begin{cases}
(0,1) & \text{if $W$ ends with $V$}\\
(0,0) & \text{otherwise.}
		 \end{cases}
\]
For the other entries use these formulas, identical to those for the derived vector:
\begin{enumerate}
\item 
${\der^2}_j(W\!R)= {\der^2}_{j-1}(W)$,
\item 
${\der^2}_j(W\!XV)={\der^2}_{j-1}(W\!X)+{\der^2}_{j-2}(W)$,

\item 
${\der^2}_j(W\!XT)=2\,{\der^2}_{j-1}(W\!X)-{\der^2}_{j-2}(W)$.
\end{enumerate}

\section{Degree of nonholonomy via the Puiseux characteristic} \label{NHPC}

We end with an observation concerning the degree of nonholonomy.
For a point $p\in S(k)$, 
let $W$ be the RVT code word at $p$.
As we know, $W$ determines the beta vector
of the focal focal distribution $\Delta(k)$ at $p$.
We denote it by
$\beta(W)=(\beta_2,\dots, \beta_{k+2})$;
its last entry $\beta_{k+2}$ is the degree of nonholonomy.
Let $\PC(W)=[\lambda_0;\lambda_1, \dots, \lambda_g]$ be the Puiseux characteristic, as defined in Section~12.5 of 
\cite{MR4887124}. Recall that this is the Puiseux characteristic
associated to any regular focal curve germ passing through $p$,
which in turn is the Puiseux characteristic associated to the
(usually singular) curve germ on $S$ obtained by projecting.
Its initial entry $\lambda_0$ is the multiplicity $m_0$ of this curve.
Note that $m_0$ is not a Goursat invariant,
as the following example illustrates.

\begin{example}
The germ of the focal distribution $\Delta(5)$ at a point with code word
$RVTRV$ is equivalent to the focal distribution at some other point
with Goursat code word $RRRRV$.
The associated Puiseux characteristics are $[6;8,9]$ and $[2;9]$.
\end{example}

\begin{theorem}\label{last}\phantom{x}
\begin{enumerate}
\item \label{lastlast}
If $W$ ends with a critical symbol $V$ or $T$,
then the last entry in the Puiseux characteristic is the degree of
nonholonomy: $\lambda_g=\beta_{k+2}$. 
\item \label{lastlastlast}
More generally, if $W$ ends with a critical symbol followed
by a string of $r$ occurrences of $R$, 
then $\lambda_g+r=\beta_{k+2}$.
\end{enumerate}
\end{theorem}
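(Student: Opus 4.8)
The plan is to prove part~(\ref{lastlast}) first and then to obtain part~(\ref{lastlastlast}) as a corollary. For that reduction, write $W=W_0R^r$ where $W_0$ is a word ending in a critical symbol, so that $W_0$ has length $k-r$. Appending a single $R$ to an RVT code word is a generic prolongation: it does not alter the (usually singular) curve germ on $S$ obtained by projecting the generic regular focal curve germ through the point, hence it does not alter the Puiseux characteristic, and in particular not its last entry $\lambda_g$. On the other hand, by the recursion $\beta_j(W\!R)=1+\beta_{j-1}(W)$ proved just above, appending a single $R$ raises the degree of nonholonomy by exactly $1$. Iterating $r$ times gives $\beta_{k+2}(W)=r+\beta_{(k-r)+2}(W_0)$, and part~(\ref{lastlast}) applied to $W_0$ then yields $\beta_{k+2}(W)=r+\lambda_g(W_0)=r+\lambda_g(W)$.

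So assume from now on that $W$ ends with a critical symbol. By the second part of Corollary~\ref{bridge} we have $\beta_{k+2}=b_{k+1}$. Using the definition~(\ref{biformula}) of the $b$ vector together with the identities $b_{i+1}-b_i=m_{k-i+1}$ and $b_2=2$ recorded in Section~\ref{tsgi}, I would telescope:
\[
\beta_{k+2}=b_{k+1}=b_2+\sum_{i=2}^{k}(b_{i+1}-b_i)=2+\sum_{l=1}^{k-1}m_l,
\]
where $m_0,m_1,m_2,\dots$ is the multiplicity sequence of a curve germ $C$ on $S$ whose lift is a regular focal curve germ through $p$ (so $m_l=1$ for $l\ge k-1$); equivalently, directly from (\ref{biformula}), $\beta_{k+2}=(k+1)+\sum_{j=3}^{k}(j-2)\,\VO_j$. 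Thus the theorem comes down to the purely singularity-theoretic identity $\lambda_g=2+\sum_{l=1}^{k-1}m_l$, which is to be shown exactly when $W$ ends with a critical symbol.

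For that last identity I would exploit the following reformulation of the hypothesis: $W$ ends with a critical symbol precisely when its length $k$ is the smallest among all RVT code words whose associated Puiseux characteristic agrees with $\PC(W)$ --- indeed, if $W$ ended in $R$, then deleting that $R$ would produce a shorter word with the same generic projected curve germ, hence the same Puiseux characteristic. One then brings in the explicit dictionary between the Puiseux characteristic, the multiplicity sequence, and the RVT code word developed in \cite{MR4887124} and summarized in Figure~\ref{invdiagram}: at this minimal length the truncated multiplicities $m_1,\dots,m_{k-1}$ are exactly the data that the classical conversion reassembles into $\lambda_g$, giving $\lambda_g=2+\sum_{l=1}^{k-1}m_l$. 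The main obstacle is precisely this matching of combinatorics --- confirming that the terminal behaviour of the RVT code word forces the truncation point $k-1$ of $\sum m_l$ to land exactly where the classical formula produces $\lambda_g$. A clean way to organize it is to verify that $\lambda_g$ and $\beta_{k+2}$ satisfy the same front-end recursions of Section~\ref{recursions} with matching base cases on words terminating in a critical symbol, bearing in mind that the two invariants must separate once trailing $R$'s are adjoined --- which is exactly the assertion of part~(\ref{lastlastlast}).
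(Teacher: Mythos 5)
Your handling of part~(\ref{lastlastlast}) is correct and matches the paper: appending $R$ leaves the Puiseux characteristic unchanged (the lifted curve and its projection are unaffected) while incrementing $\beta_{k+2}$ by~$1$ via the back-end recursion, so part~(\ref{lastlastlast}) follows from part~(\ref{lastlast}).

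For part~(\ref{lastlast}), your telescoping
\[
\beta_{k+2}=b_{k+1}=2+\sum_{l=1}^{k-1}m_l
\]
is correct and is a pleasant reformulation that the paper does not write explicitly (the paper uses the equivalent one-step relation $\beta_{k+2}(W)=m_0(\LG(W))+\beta_{k+1}(\LG(W))$). But you then reduce the theorem to the identity $\lambda_g=2+\sum_{l=1}^{k-1}m_l$ for $W$ ending in a critical symbol and stop there: you gesture at an ``explicit dictionary'' and at ``verifying that $\lambda_g$ and $\beta_{k+2}$ satisfy the same front-end recursions with matching base cases,'' and you explicitly call this ``the main obstacle.'' That obstacle is exactly the content of the paper's proof, and it is not dispatched by appeal to the minimal-length reformulation (which is a true observation but is not load-bearing as you use it). What is missing, concretely, is: (i) identifying the base case of the front-end induction (the words $RVT^{\tau}$, for which one checks directly $\PC=[\tau+2;\tau+3]$ and $\beta=(1,2,\dots,\tau+3)$, so $\lambda_g=\beta_{k+2}=\tau+3$); (ii) observing that for any other $W$ ending critically, $\LG(W)$ again ends critically; and (iii) invoking the precise front-end recursion for the last Puiseux exponent — in the paper this is Theorem~24 of \cite{MR4887124}, which in all its cases yields $\lambda_g(W)=\lambda_0(\LG(W))+\lambda_{\text{last}}(\LG(W))$ — and matching it against $\beta_{k+2}(W)=m_0(\LG(W))+\beta_{k+1}(\LG(W))$, using $\lambda_0=m_0$. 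Without (i)--(iii) the induction is not actually run, and the identity $\lambda_g=2+\sum_{l=1}^{k-1}m_l$ is asserted rather than proved. So the overall plan is the right one and the reduction is sound, but the proof as written has a genuine gap at its central step.
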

\begin{proof}
To prove (\ref{lastlast}), we first observe that
if $W$ is $RVT^{\tau}$, then the Puiseux characteristic is
$[\tau+2;\tau+3]$ and the beta vector is $(1,2,\dots,\tau+3)$.
These words provide the base cases for a recursion.
If $W$ is not of this form, then its lifted word $L(W)$
again ends in a critical symbol.
Since the beta vector is the accumulation vector for the multiplicities,
we know
\begin{equation} \label{betambeta}
\beta_{k+2}(W)=m_0(L(W))+\beta_{k+1}(L(W)).
\end{equation}
There is a similar result for the last entry of the Puiseux
characteristic, according to Theorem~24 of \cite{MR4887124};
in all three cases of that theorem we see that
\[
\lambda_g(W)=\lambda_0(L(W))+\lambda_{\text{last}}(L(W)).
\]
(Here $\lambda_{\text{last}}$ is either $\lambda_g$ or $\lambda_{g-1}$.)
Thus if the desired equation holds for $L(W)$ it likewise holds for $W$.

Part (\ref{lastlastlast}) follows from the observation that
adding an $R$ to the end of a word doesn't alter the Puiseux
characteristic, while increasing the degree of holonomy by 1.
\end{proof}

The proof is elementary, but it may raise a question. As we have said, $m_0$ is not a Goursat invariant, but it appears
in equation (\ref{betambeta}) as the difference of two
Goursat invariants; how can this be?
The apparent contradiction is resolved by observing
that all the RVT code words associated to a particular Goursat
distribution have the same lifted word; thus although
the distribution does not determine $m_0(W)$
it does determine $m_0(L(W))$. To say this another way,
the quantity $m_0(L(W))$ is the same as $m_1(W)$,
and we know that $m_1$ is a Goursat invariant.

Theorem~\ref{last} is implicit in \cite{MR3152113}.
Our discussion here concerns just the initial entry and last entry
of the Puiseux characteristic; the cited paper also
treats the more intricate relations between
the other entries of the Puiseux characteristic
and the multiplicity sequence.


\bibliography{mybib}{}
\bibliographystyle{plain}


\end{document}